\newcommand{\F}{\mathbb{F}}
\newcommand{\N}{\mathbb{N}}
\newcommand{\Z}{\mathbb{Z}}
\newcommand{\R}{\mathbb{R}}
\newcommand{\C}{\mathbb{C}}
\newcommand{\calF}{\mathcal{F}}
\newcommand{\calG}{\mathcal{G}}
\newcommand{\calI}{\mathcal{I}}
\newcommand{\stem}{\operatorname{stem}}
\newcommand\im{\operatorname{range}}
\newcommand\id{{\operatorname{id}}}
\newcommand{\lsp}{\operatorname{span}}
\newcommand{\clsp}{\overline{\lsp}}
\newcommand{\BS}{\operatorname{BS}}
\numberwithin{equation}{section}
\newtheorem{thm}{Theorem}[section]
\newtheorem{lemma}[thm]{Lemma}
\newtheorem{prop}[thm]{Proposition}
\theoremstyle{definition}
\newtheorem{defn}[thm]{Definition}
\newtheorem{example}[thm]{Example}
\newtheorem*{example*}{Example}
\begin{document}

\author[an Huef]{Astrid an Huef}
\author[Raeburn]{Iain Raeburn}
\author[Tolich]{Ilija Tolich}
\address{Department of Mathematics and Statistics, University of Otago, PO Box 56, Dunedin 9054, New Zealand.}
\email{\{astrid, iraeburn, itolich\}@maths.otago.ac.nz}

\keywords{Toeplitz algebras, quasi-lattice order, HNN extension, Baumslag-Solitar groups, amenability}
\subjclass[2010]{46L55, 46L05}

\thanks{This research was supported by the Marsden Fund of the Royal Society of New Zealand and by a University of Otago Publishing Bursary}

\title[HNN extensions of quasi-lattice ordered groups]{HNN extensions of quasi-lattice ordered groups and their operator algebras}

\begin{abstract} The Baumslag-Solitar group is an example of an HNN extension. 
Spielberg showed that it has a natural positive cone, and that it is then a quasi-lattice ordered group in the sense of Nica. We give conditions for an HNN extension of a quasi-lattice ordered group $(G,P)$ to be quasi-lattice ordered. In that case,  if $(G,P)$ is  amenable  as a quasi-lattice ordered group, then so is the HNN extension.
\end{abstract}

\date{8 March, 2017}
\maketitle

\section{Introduction}
Since they were introduced by Nica  \cite{Nica1992}, quasi-lattice ordered groups and their $C^*$-algebras have generated considerable interest (see, for example, \cite{LacaRaeburn1996},\cite{LacaRaeburn2010}). The amenability of quasi-lattice ordered groups has been a deep subject  (see, for example, \cite{CrispLaca2002},\cite{CrispLaca2007} and \cite{Li2012}). Quasi-lattice ordered groups are also   examples of the more recent LCM semigroups \cite{BrownloweLarsenStammeier2016}, \cite{Starling2015}.  
Here we generalise two recent results about the Baumslag-Solitar group.

First,  Spielberg proved   that the Baumslag-Solitar group is quasi-lattice ordered  \cite{Spielberg2012}. The Baumslag-Solitar group is an example of an HNN extension of $\Z$, and hence we wondered if HNN extensions could provide  new classes of quasi-lattice ordered groups. Spielberg also showed that a groupoid associated to  the Baumslag-Solitar semigroup is amenable \cite[Theorem~3.22]{Spielberg2012}.

Second, Clark, an Huef and Raeburn examined the phase-transitions of the Toeplitz algebra of the Baumslag-Solitar group \cite{HuefClarkRaeburn2015}. As part of their investigation they proved that the Baumslag-Solitar group is amenable as a quasi-lattice ordered group. The standard way to prove amenability, introduced by Laca and Raeburn \cite{LacaRaeburn1996}, is to use a ``controlled map'': an order-preserving homomorphism between quasi-lattice ordered groups. They  observed that the height map, which counts the number of times the stable letter of the HNN extension appears in a word, is almost a controlled map,  and then they adapted the standard proof in \cite[Appendix~A]{HuefClarkRaeburn2015}  to fit.

Our  innovation in this paper is a more general definition of a controlled map. We  prove in Theorem~\ref{p-controlled map implies amenable} that if $(G,P)$ is a quasi-lattice ordered group  and there is a controlled map $\mu$ into an amenable group,  and if $\ker\mu$  is an amenable quasi-lattice ordered group, then $G$  is amenable. The motivation for Theorem~\ref{p-controlled map implies amenable} was two-fold. First,  if a normal subgroup $N$ of a group $G$ is amenable and $G/N$ is amenable, then $G$ is amenable, and second,  Spielberg's  result  on  amenability of groupoids  \cite[Proposition~9.3]{Spielberg2014}.

In Theorem~\ref{(G*,P*) is left ql} we give conditions under which an HNN extension of a quasi-lattice ordered group is  quasi-lattice ordered. This result allows us to construct many new examples of quasi-lattice ordered groups. Finally, we use Theorem~\ref{p-controlled map implies amenable} to prove that an HNN extension of an amenable  quasi-lattice ordered group is  amenable (Theorem~\ref{HNN extensions are amenable as expected}).
\section{Preliminaries}
Let $P$ be a subsemigroup of a discrete group $G$ such that $P\cap P^{-1}=\{e\}$. There is a partial order on $G$ defined by
\[x\leq y\Leftrightarrow x^{-1}y\in P\Leftrightarrow y\in xP.
\]
 The order is left-invariant in the sense that $x\leq y$ implies $zx\leq zy$ for all $z \in G$.
A partially ordered group $(G, P)$ is \emph{quasi-lattice ordered} if every finite subset of $G$ with a common upper bound in $P$ has a least common upper bound in $P$  \cite[Definition 2.1]{Nica1992}.  By \cite[Lemma 7]{CrispLaca2002},   $(G,P)$ is  quasi-lattice ordered
if and only if:
\begin{equation}\label{Crisp Laca}
\ \hfill
\parbox{0.9\textwidth}{
 if $x\in PP^{-1}$, then there exist a pair  $\mu, \nu\in P$ with $x=\mu\nu^{-1}$  such that $\gamma, \delta\in P$ and $\gamma\delta^{-1}=\mu\nu^{-1}$ imply $\mu\leq \gamma$ and $\nu\leq \delta$. (The pair $\mu, \nu$ is unique.)}\end{equation}

Let $(G, P)$ be a quasi-lattice ordered group, and let $x, y\in G$.
If $x$ and $y$ have a common upper bound in $P$,  then their least common upper bound in $P$ is denoted   $x\vee y$. We write $x\vee y=\infty$ when $x$ and $y$ have no common upper bound in $P$ and $x\vee y<\infty$ when  they have  a common upper bound.
An \emph{isometric representation of $P$} in a $C^*$-algebra $A$ is a map $W:P\to A$ such that $W_e=1$, $W_p$ is an isometry and $W_pW_q=W_{pq}$  for all $p,q\in P$. We say that $W$ is \emph{covariant} if
\begin{equation}\label{covariance condition}
W_pW^*_pW_qW^*_q=\begin{cases}W_{p\vee q}W^*_{p\vee q}&\text{if $p\vee q<\infty$}\\0&\text{otherwise.}
\end{cases}
\end{equation}
Equivalently,  $W$ is covariant if
\[W^*_pW_q=\begin{cases}W_{p^{-1}(p\vee q)}W^*_{q^{-1}(p\vee q)}&\text{if $p\vee q<\infty$}\\0&\text{otherwise.}\end{cases}
\]
An example of a covariant representation is  $T:P\to B(\ell^2(P))$ characterised  by $T_p\epsilon_x=\epsilon_{px}$ where $\{\epsilon_x:x\in P\}$ is the orthonormal basis of point masses in $\ell^2(P)$.

In \cite[\S\S2.4 and 4.1]{Nica1992} Nica examined  two $C^*$-algebras associated to  $(G, P)$.
The reduced $C^*$-algebra $C^*_r(G,P)$ of $(G,P)$  is the $C^*$-subalgebra of $B(\ell^2(P))$ generated by $\{T_p:p\in P\}$.
The universal $C^*$-algebra $C^*(G,P)$  of $(G,P)$ is   generated by  a universal covariant representation $w$; it is universal for covariant representations of $P$ in the following sense: for any covariant representation $W:P\to A$ there exists a unital homomorphism $\pi_W:C^*(G,P)\to A$ such that $\pi_W(w_p)=W_p$. It follows from \eqref{covariance condition} that \[C^*(G,P)= \overline{\lsp}\{w_pw^*_q:p, q\in P\}.\]

 Nica  defined   $(G,P)$  to be \emph{amenable}  if the homomorphism $\pi_T:C^*(G,P)\to C^*_r(G,P)$ is faithful  \cite[\S4.2]{Nica1992}. He identified an equivalent condition: there exists a conditional expectation $E:C^*(G,P)\to \overline{\lsp}\{w_pw^*_p:p\in P\}$, and $(G,P)$ is amenable if and only if $E$ is faithful (that is, $E(a^*a)=0$ implies $a^*a=0$ for all $a\in C^*(G,P)$). Laca and Raeburn took this second condition as their definition of amenability \cite[Definition 3.4]{LacaRaeburn1996}.

 \section{Order-preserving maps and amenability}
 A key technique, introduced  by Laca and Raeburn in \cite[Proposition 6.6]{LacaRaeburn1996}\footnote{There is an error in the statement of \cite[Proposition 6.6]{LacaRaeburn1996}: the final line should read ``if $\calG$ is amenable then $(G,P)$ is amenable".}, is the use of an order-preserving homomorphism between two quasi-lattice ordered groups which preserves the least upper bound structure. Crisp and Laca  called such a homomorphism a \emph{controlled map} \cite{CrispLaca2007}. If $(G,P)$ and $(K,Q)$ are quasi-lattice ordered groups, $\mu:G\to K$ is a controlled map and $K$ is an amenable group, then  $(G,P)$ is amenable as a quasi-lattice ordered group by \cite[Proposition 6.6]{LacaRaeburn1996}.
Motivated by work in \cite[Appendix~A]{HuefClarkRaeburn2015} we now give a weaker definition for a controlled map. We then follow the program of \cite{HuefClarkRaeburn2015} to generalise   \cite[Proposition 6.6]{LacaRaeburn1996}.  We state this generalisation in Theorem~\ref{p-controlled map implies amenable} below; its  proof will take up the remainder of this section.

 \begin{defn}\label{expanded controlled map definition}
    Let $(G,P)$ and $(K,Q)$ be quasi-lattice ordered groups. Let $\mu:G\to K$ be an order-preserving group homomorphism. For each $k\in Q$, let $\Sigma_k$ be the set of  $\sigma\in \mu^{-1}(k)\cap P$ which are minimal in the sense that
    \[x\in \mu^{-1}(k)\cap P \text{\ and\ }x\leq\sigma\Rightarrow \sigma=x.\]
     We say $\mu$ is a \emph{controlled map} if it has the following properties:
    \begin{enumerate}
      \item\label{cm-1} For all $x,y\in G$ such that $x\vee y<\infty$ we have $\mu(x)\vee \mu(y)=\mu(x\vee y)$.
      \item\label{cm-2}  For all $k\in Q$, $\Sigma_k$ is complete in the following sense: for every $x\in \mu^{-1}(k)\cap P$ there exists $\sigma\in \Sigma_k$ such that $\sigma\leq x$.
      \item\label{cm-3}  For all $k\in Q$ and  $\sigma,\tau\in\Sigma_k$ we have $\sigma\vee\tau<\infty\Rightarrow\sigma=\tau$.
    \end{enumerate}
  \end{defn}

  \begin{thm}\label{p-controlled map implies amenable}
    Let $(G,P)$ and $(K,Q)$ be quasi-lattice ordered groups. Suppose that $\mu:G\to K$ is a controlled map. If $K$ is an amenable group and $\big(\mu^{-1}(e),\mu^{-1}(e)\cap P\big)$ is an amenable quasi-lattice ordered group, then $(G,P)$ is amenable.
  \end{thm}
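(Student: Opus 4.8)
The plan is to verify amenability in the form recorded in the preliminaries: I must show that the canonical conditional expectation $E\colon C^*(G,P)\to D$ onto the diagonal $D:=\clsp\{w_pw_p^*:p\in P\}$ is faithful. Following the program of \cite{LacaRaeburn1996} and \cite{HuefClarkRaeburn2015}, I would factor $E$ as a composition $E=E'\circ\Phi$ of two conditional expectations, each faithful for a different reason: $\Phi$ onto an intermediate algebra $B_e$ using amenability of $K$, and $E'\colon B_e\to D$ using amenability of the kernel. Since a composition of faithful expectations is faithful, this yields the theorem.

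First I would produce $\Phi$. Because $\mu$ is a homomorphism, the assignment sending $w_p$ to $w_p\otimes\mu(p)$ in $C^*(G,P)\otimes C^*(K)$ (with $\mu(p)$ viewed as a generating unitary of $C^*(K)$) respects the covariance relation \eqref{covariance condition}, and so extends to a coaction $\delta$ of $K$; property \eqref{cm-1} is what keeps $\delta$ compatible with the lattice structure used below. The fixed-point algebra is $B_e=\clsp\{w_pw_q^*:\mu(p)=\mu(q)\}$, which contains $D$. Since $K$ is amenable, $\delta$ is normal and the $e$-Fourier coefficient defines a faithful conditional expectation $\Phi\colon C^*(G,P)\to B_e$; this is precisely the mechanism of \cite[Proposition~6.6]{LacaRaeburn1996}. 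In the setting of \cite{LacaRaeburn1996} one had $B_e=D$ and the argument ended here; the novelty is that a nontrivial kernel forces $B_e\supsetneq D$.

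The heart of the argument is therefore to show that $E':=E|_{B_e}\colon B_e\to D$ is faithful using amenability of $\big(\mu^{-1}(e),\mu^{-1}(e)\cap P\big)$. Here I would first invoke \eqref{cm-1} to check that the restriction of $w$ to $\mu^{-1}(e)\cap P$ is a covariant representation of the kernel quasi-lattice ordered group: if $s,t\in\mu^{-1}(e)\cap P$ and $s\vee t<\infty$ in $G$, then $\mu(s\vee t)=\mu(s)\vee\mu(t)=e$, so $s\vee t$ lies in the kernel and the least upper bounds computed in the kernel and in $G$ coincide. By universality this embeds $C^*\big(\mu^{-1}(e),\mu^{-1}(e)\cap P\big)$ in $B_e$, carrying its own expectation $E_N$ onto $D_N=\clsp\{w_sw_s^*:s\in\mu^{-1}(e)\cap P\}$, faithful by hypothesis. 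Next I would use the minimal elements $\Sigma_k$: writing each $p\in\mu^{-1}(k)\cap P$ as $p=\sigma p'$ with $\sigma\in\Sigma_k$ (possible by \eqref{cm-2}) and $p'\in\mu^{-1}(e)\cap P$, every spanning element of $B_e$ takes the form $w_\sigma(w_{p'}w_{q'}^*)w_\tau^*$ with $\sigma,\tau\in\Sigma_k$. Property \eqref{cm-3} makes the $w_\sigma$ ($\sigma\in\Sigma_k$) isometries with orthogonal ranges, so these elements multiply like matrix units with entries in the kernel algebra; this exhibits a matricial structure over $C^*\big(\mu^{-1}(e),\mu^{-1}(e)\cap P\big)$ on which $E'$ acts as the faithful diagonal expectation tensored with $E_N$, whence $E'$ is faithful.

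I expect this last step to be the main obstacle. Making the matricial description of $B_e$ precise is delicate: the decomposition $p=\sigma p'$ is not canonical, and corners attached to different degrees $k$ in $Q$ need not be orthogonal, since products of spanning elements can descend through \eqref{covariance condition} to lower degrees. Organising $B_e$ over the kernel algebra, and verifying that $E'$ genuinely restricts to a tensor product of faithful expectations on each piece, is the part requiring real care. Once the structure of $B_e$ is pinned down, propagating faithfulness from $E_N$ to $E'$ and composing with $\Phi$ to conclude that $E$ is faithful is routine.
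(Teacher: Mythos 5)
Your first half coincides with the paper's: the coaction $\delta_\mu(w_p)=w_p\otimes u_{\mu(p)}$, the faithful expectation $\Psi^\mu=(\id\otimes\tau)\circ\delta_\mu$ onto the core $B_e=\clsp\{w_pw_q^*:\mu(p)=\mu(q)\}$ when $K$ is amenable, the identity $E=E\circ\Psi^\mu$, and even the matrix-unit structure coming from \eqref{cm-3}: the paper proves (its Lemma on $B_{k,F}$) that for each finite $F\subseteq\Sigma_k$ the algebra $B_{k,F}=\lsp\{w_\sigma Dw_\tau^*:\sigma,\tau\in F,\ D\in C^*(\mu^{-1}(e),\mu^{-1}(e)\cap P)\}$ is isomorphic to $M_F(\C)\otimes C^*\big(\mu^{-1}(e),\mu^{-1}(e)\cap P\big)$. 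But the step you defer --- ``verifying that $E'$ genuinely restricts to a tensor product of faithful expectations on each piece'' --- is precisely where your route breaks down, for two concrete reasons. First, even granting that $E$ acts on each $B_{k,F}$ as (diagonal expectation on $M_F(\C)$) $\otimes\, E_N$, hence faithfully there, faithfulness of a positive map on a dense increasing union of subalgebras does \emph{not} pass to the closure $B_k=\overline{\cup_F B_{k,F}}$: a positive element of the limit with $E$-image zero need not be approximable by positive elements of the pieces with small $E$-image. Second, as you yourself note, the corners $B_k$ for distinct $k\in Q$ are not orthogonal and interact multiplicatively (products descend via \eqref{covariance condition} to $B_{\mu(q)\vee\mu(r)}$), so ``$E'$ is a direct sum of faithful expectations'' is false as stated, and you supply no replacement mechanism. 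Your sketch thus leaves the actual heart of the theorem unproved.

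The paper resolves both difficulties spatially rather than abstractly, and this is a genuinely necessary change of strategy, not a routine completion. It never proves that $E|_{B_e}$ is faithful as an abstract expectation; instead it shows that the Toeplitz representation $\pi_T$ is \emph{isometric} on each $B_{k,F}$ restricted to the invariant subspace $H_k=\{\epsilon_{\gamma z}:\gamma\in\Sigma_k,\ z\in\mu^{-1}(e)\cap P\}$ (using that $\pi_T$ restricted to the kernel algebra on $H_e$ is the kernel's Toeplitz representation, faithful by hypothesis). Isometricity, unlike faithfulness of an expectation, \emph{does} extend to the inductive limit $B_k$. The cross-$k$ interaction is then handled order-theoretically: for a finite $\vee$-closed set $I\subseteq Q$ one has $C_I=\lsp\{B_k:k\in I\}$, and if $R=\sum_{k\in I}R_k$ with $\pi_T(R)=0$, one restricts to $H_l$ for $l$ minimal in $I$, observes $\pi_T(B_k)|_{H_l}=0$ whenever $k\not\leq l$ (since $q\leq\gamma z$ forces $\mu(q)\leq\mu(\gamma)$), concludes $R_l=0$, and iterates; this gives $\pi_T$ faithful on $B_e=\overline{\cup_{I\in\calI}C_I}$. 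Finally, instead of composing two expectations, the proof uses the globally faithful diagonal expectation $\Delta$ on $B(\ell^2(P))$ satisfying $\Delta\circ\pi_T=\pi_T\circ E$: from $E(R^*R)=0$ one gets $\Delta(\pi_T(\Psi^\mu(R^*R)))=0$, hence $\pi_T(\Psi^\mu(R^*R))=0$, hence $\Psi^\mu(R^*R)=0$ by faithfulness of $\pi_T$ on the core, hence $R=0$. To repair your proposal you would need to import exactly this apparatus: the subspaces $H_k$, the isometricity (not mere faithfulness) statement, and the induction over minimal elements of finite $\vee$-closed subsets of $Q$.
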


We start by showing that the kernel of a controlled map is a quasi-lattice ordered group.

\begin{lemma}
  Let $(G,P)$ and $(K,Q)$ be quasi-lattice ordered groups, and suppose that $\mu:G\to K$ is a controlled map.  Then $\big(\mu^{-1}(e),\mu^{-1}(e)\cap P\big)$ is a quasi-lattice ordered group.
\end{lemma}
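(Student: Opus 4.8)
The plan is to write $N=\mu^{-1}(e)$ and $P_N=N\cap P$, and to verify directly that $(N,P_N)$ satisfies Nica's definition, relying only on property~\eqref{cm-1} of a controlled map. First I would record the easy structural facts. Since $\mu$ is a homomorphism, $N$ is a (normal) subgroup of $G$; since $P$ is a subsemigroup and $N$ a subgroup, $P_N=N\cap P$ is a subsemigroup of $N$, and $P_N\cap P_N^{-1}\subseteq P\cap P^{-1}=\{e\}$, so $(N,P_N)$ is a partially ordered group. Moreover, for $x,y\in N$ we have $x^{-1}y\in N$ automatically, so $x\leq_N y\Leftrightarrow x^{-1}y\in P_N\Leftrightarrow x^{-1}y\in P\Leftrightarrow x\leq_G y$; that is, the order on $N$ is exactly the restriction of the order on $G$.

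Next I would reduce to pairs. As is standard for partially ordered groups, it suffices to show that any two elements $x,y\in N$ with a common upper bound in $P_N$ have a least common upper bound in $P_N$: an induction on the size of a finite set $F$ then produces the least upper bound of $F$, using that any common upper bound of $F$ is also a common upper bound of the pair consisting of $f\in F$ and the join of $F\setminus\{f\}$ (which exists in $P_N$ by the inductive hypothesis).

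The heart of the argument is then the following. Suppose $x,y\in N$ have a common upper bound $z\in P_N$. Since $P_N\subseteq P$, the elements $x,y$ have a common upper bound in $P$, so $x\vee y$ exists in $(G,P)$ and $x\vee y<\infty$. Applying property~\eqref{cm-1} gives $\mu(x\vee y)=\mu(x)\vee\mu(y)=e\vee e=e$, where we use that $e$ is the least element of $Q$ and hence $e\vee e=e$. Thus $x\vee y\in N$, and so $x\vee y\in P_N$. Finally I would check that $x\vee y$ is the least upper bound in $(N,P_N)$: it is an upper bound of $x$ and $y$ by construction, and if $w\in P_N$ is any common upper bound of $x$ and $y$ in $N$, then $w$ is a common upper bound in $P$, so $x\vee y\leq w$ in $(G,P)$; since $x\vee y,w\in N$ and the order on $N$ restricts that on $G$, we get $x\vee y\leq w$ in $(N,P_N)$, as required.

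I expect the only real subtlety — and the step to state carefully — to be the identity $\mu(x\vee y)=e$ obtained from \eqref{cm-1}, which is precisely what forces the join computed in the ambient group $G$ to land back inside the kernel $N$; everything else is a routine check that $P_N$ is a positive cone and that the order and least upper bounds on $N$ are inherited from $(G,P)$. Note that only property~\eqref{cm-1} of the controlled map is used here.
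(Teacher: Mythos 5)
Your proof is correct and follows essentially the same route as the paper: both arguments take the common upper bound in $P_N\subseteq P$ to conclude that $x\vee y$ exists in $(G,P)$, then apply property~(1) of the controlled map to get $\mu(x\vee y)=\mu(x)\vee\mu(y)=e$, forcing the join back into $\mu^{-1}(e)\cap P$. The extra details you supply (reduction from finite sets to pairs, and the check that the order and joins on $N$ are the restrictions of those on $G$) are routine verifications the paper leaves implicit.
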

\begin{proof}
  It is clear that $\mu^{-1}(e)$ is a subgroup of $G$  and that$\mu^{-1}(e)\cap P$ is a unital semigroup. Suppose that $x,y\in \mu^{-1}(e)$ have a common upper bound $z\in \mu^{-1}(e)\cap P$.  We know that $z$ is a common  upper bound for $x,y$ in $P$, and hence $x\vee y$ exists in $P$ and $x\vee y\leq z$. Now $\mu(x\vee y)=\mu(x)\vee \mu(y)=e$, and hence $x\vee y\in \mu^{-1}(e)\cap P$. Thus $\big(\mu^{-1}(e),\mu^{-1}(e)\cap P\big)$ is a quasi-lattice ordered group.
\end{proof}

  To prove Theorem~\ref{p-controlled map implies amenable} we will show that the conditional expectation \[E:C^*(G,P)\to \overline{\lsp}\{w_pw^*_p:p\in P\}\] is faithful. We will use the amenability of $K$ to construct a faithful conditional expectation $\Psi^\mu:C^*(G,P)\to\clsp\{w_pw^*_q:\mu(p)=\mu(q)\}$, and then show that $E$ is faithful when restricted to $\im \Psi^\mu$. To construct $\Psi^\mu$ we follow  the method of \cite[Lemma 6.5]{LacaRaeburn1996}  which uses a coaction.

Let $G$ be a discrete group and let $A$ be a unital $C^*$-algebra. Let $$\delta_G:C^*(G)\to C^*(G)\otimes_{\min} C^*(G)$$ be the comultiplication of $G$ which is characterised by $\delta_G(u_g)=u_g\otimes u_g$ for  $g\in G$.
  A \emph{coaction} of $G$ on  $A$ is a unital homomorphism $\delta:A\to A\otimes_{\min} C^*(G)$ such that
  $$(\delta\otimes \id)\circ\delta=(\id\otimes \delta_G)\circ \delta.$$
   We say that $\delta$ is \emph{nondegenerate} if  $\delta(A)(1\otimes C^*(G))=A\otimes_{\min} C^*(G)$.
\begin{lemma}\label{faithful coaction exists}
   Let $(G,P)$ be a quasi-lattice ordered group. Suppose that there exists a group $K$ and a homomorphism $\mu:G\to K$.
  Then there exists an injective coaction $$\delta_\mu:C^*(G,P)\to C^*(G,P)\otimes_{\min} C^*(K)$$ characterised by $\delta_\mu(w_p)=w_p\otimes u_{\mu(p)}$ for all $p\in P$.
  \end{lemma}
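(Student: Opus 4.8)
The plan is to obtain $\delta_\mu$ from the universal property of $C^*(G,P)$ quoted above. Define $W\colon P\to C^*(G,P)\otimes_{\min}C^*(K)$ by $W_p=w_p\otimes u_{\mu(p)}$, and first check that $W$ is an isometric representation of $P$. We have $W_e=w_e\otimes u_{\mu(e)}=1\otimes 1=1$ since $\mu(e)=e$; each $W_p$ is an isometry because $W_p^*W_p=w_p^*w_p\otimes u_{\mu(p)}^*u_{\mu(p)}=1\otimes 1$; and $W_pW_q=w_{pq}\otimes u_{\mu(p)\mu(q)}=W_{pq}$ because $\mu$ is a homomorphism. The crucial observation for covariance is that each $u_{\mu(p)}$ is \emph{unitary}, so $W_pW_p^*=w_pw_p^*\otimes 1$; hence $W_pW_p^*W_qW_q^*=w_pw_p^*w_qw_q^*\otimes 1$, and the covariance of $W$ follows immediately from \eqref{covariance condition} applied to the universal covariant representation $w$.

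By the universal property, $W$ then integrates to a unital homomorphism $\delta_\mu:=\pi_W\colon C^*(G,P)\to C^*(G,P)\otimes_{\min}C^*(K)$ with $\delta_\mu(w_p)=w_p\otimes u_{\mu(p)}$. To see that $\delta_\mu$ is a coaction I would verify the coassociativity identity $(\delta_\mu\otimes\id)\circ\delta_\mu=(\id\otimes\delta_K)\circ\delta_\mu$ on the generators $w_p$: both composites send $w_p$ to $w_p\otimes u_{\mu(p)}\otimes u_{\mu(p)}$, using $\delta_K(u_{\mu(p)})=u_{\mu(p)}\otimes u_{\mu(p)}$. Since both are homomorphisms agreeing on a generating set, they coincide.

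The one remaining, and main, point is injectivity. Here I would use the counit (augmentation) of $C^*(K)$, that is, the character $\epsilon\colon C^*(K)\to\C$ determined by $\epsilon(u_k)=1$ for all $k\in K$, which is the integrated form of the trivial representation of $K$. Because $\epsilon$ is multiplicative, the right slice map $\id\otimes\epsilon\colon C^*(G,P)\otimes_{\min}C^*(K)\to C^*(G,P)$ is a genuine $*$-homomorphism, and on generators it gives $(\id\otimes\epsilon)(\delta_\mu(w_p))=\epsilon(u_{\mu(p)})\,w_p=w_p$. Thus $(\id\otimes\epsilon)\circ\delta_\mu$ is a homomorphism agreeing with $\id_{C^*(G,P)}$ on the generators, hence equals the identity, so $\delta_\mu$ has a left inverse and is injective. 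The only technical care needed is to confirm that $\id\otimes\epsilon$ is well defined and bounded on the minimal tensor product, which holds for slice maps by any bounded functional; its multiplicativity (a consequence of $\epsilon$ being a character) is exactly what makes the left-inverse argument clean, and so I expect injectivity to be the step requiring the most attention.
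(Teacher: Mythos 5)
Your proposal is correct and follows essentially the same route as the paper: the same covariant representation $W_p=w_p\otimes u_{\mu(p)}$ integrated via the universal property, the same generator check of coassociativity, and the same injectivity mechanism via the trivial character $\epsilon(u_k)=1$ (the paper merely phrases the slice map as $(\pi\otimes\epsilon)\circ\delta_\mu=\pi$ for a faithful representation $\pi$, citing \cite[Proposition B.13]{RaeburnWilliams1998}, which is your $\id\otimes\epsilon$ argument in concrete form). The paper additionally verifies nondegeneracy of $\delta_\mu$, but that is not claimed in the statement, so your proof is complete as it stands.
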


  \begin{proof}
Let  $W:P\to C^*(G,P)\otimes_{\min}C^*(K)$ be characterised by $W_p=w_p\otimes u_{\mu(p)}$. We will show that $W$  is a covariant representation, and then take $\delta_\mu:=\pi_W$. Unitaries are isometries and hence $W_p$ is isometric for all $p\in P$. Observe that $W_e=w_e\otimes u_{\mu(e)}=1\otimes 1$, and
 $$W_pW_q=w_pw_q\otimes u_{\mu(p)}u_{\mu(q)}=w_{pq}\otimes u_{\mu(pq)}=W_{pq}\text{ for all $p,q\in P$.}$$ Thus $W$ is an isometric representation. To prove $W$ is covariant, we fix $x,y\in P$ and compute:
    \begin{align*}
      W_xW^*_xW_yW^*_y&=w_xw^*_xw_yw^*_y\otimes u_{\mu(x)}u^*_{\mu(x)}u_{\mu(y)}u^*_{\mu(y)}\\
      &=\begin{cases}w_{x\vee y}w^*_{x\vee y}\otimes 1&\text{if $x\vee y<\infty$}\\0\otimes 1&\text{otherwise}\end{cases}\\
      &=\begin{cases}w_{x\vee y}w^*_{x\vee y}\otimes w_{\mu(x\vee y)}w^*_{\mu(x\vee y)}&\text{if $x\vee y<\infty$}\\0&\text{otherwise}\end{cases}\\
      &=W_{x\vee y}W^*_{x\vee y}.
    \end{align*}
    Thus $W$ is a covariant representation of $P$.
    By the universal property of $C^*(G,P)$, there exists a  homomorphism $\delta_\mu:=\pi_W$, which has the desired properties. Since $W_e=1\otimes 1$ it follows that $\delta_\mu$ is unital.

     To prove the  comultiplication identity,  we compute on  generators: for $p,q\in P$ we have
    \begin{align*}
      ((\delta_\mu\otimes\id)\circ \delta_\mu)(w_pw^*_q)&=(\delta_\mu\otimes\id)(w_pw^*_q\otimes u_{\mu(pq^{-1})})\\
      &=\delta_\mu(w_pw^*_q)\otimes \id(u_{\mu(pq^{-1})})\\
      &=w_pw^*_q\otimes u_{\mu(pq^{-1})}\otimes u_{\mu(pq^{-1})}\\
      &=w_pw^*_q\otimes \delta_K(u_{\mu(pq^{-1})})\\
      &=\id\otimes\delta_K(w_pw^*_q\otimes u_{\mu(pq^{-1})})\\
      &=((\id\otimes\delta_K)\circ \delta_\mu)(w_pw^*_q).
    \end{align*}
    Hence $(\delta_\mu\otimes\id)\circ\delta_\mu=(\id\otimes\delta_K)\circ \delta_\mu$.  Thus $\delta_\mu$ is a coaction.

To show that $\delta_\mu$ is injective, let $\pi:C^*(G,P)\to B(H)$ be  a faithful  representation. We will show that   $\pi$ can be written as a composition of $\delta_\mu$ and another representation.
Let $\epsilon:C^*(K)\to\C$ be the trivial representation on $\C$ such that $\epsilon(u_k)=1$ for all $k\in K$. By the properties of the minimal tensor product (see \cite[Proposition B.13]{RaeburnWilliams1998}) there exists a homomorphism $$\pi\otimes \epsilon:C^*(G,P)\otimes_{\min} C^*(G)\to B(H)\otimes\C=B(H).$$
    Since     $$(\pi\otimes \epsilon)\circ \delta_\mu(w_p)=(\pi\otimes \epsilon)(w_p\otimes u_{\mu(p)})=\pi(w_p), $$
    we have   $\pi=(\pi\otimes \epsilon)\circ \delta_\mu$.  Now suppose that $\delta_\mu(a)=0$ for some $a\in C^*(G,P)$. Then $0=(\pi\otimes \epsilon)\circ \delta_\mu(a)=\pi(a)$. Since $\pi$ is faithful, $a=0$. Hence $\delta_\mu$ is injective.

    To prove that $\delta_\mu$ is a nondegenerate coaction we must show that $$\delta_\mu(C^*(G,P))(1\otimes C^*(K))=C^*(G,P)\otimes_{\min} C^*(K).$$ It suffices to show that we can get the spanning elements $w_pw^*_q\otimes u_k$, and this is easy: $$\delta_\mu(w_pw^*_q)(1\otimes u_{\mu(qp^{-1}) k})=w_pw^*_q\otimes u_{\mu(pq^{-1})}(1\otimes u_{\mu(qp^{-1}) k})=w_pw^*_q\otimes u_k.$$ Thus $\delta_\mu$ is nondegenerate.
 \end{proof}

 Let $\lambda$ be the left-regular representation of a discrete group $K$. There is a trace $\tau$ on $C^*(K)$ characterised by
 \begin{equation*}
 \tau(u_k)= (\lambda_k\epsilon_e\mid \epsilon_e)=\begin{cases}1 &\text{if $k=e$}\\ 0&\text{otherwise}.  \end{cases}
 \end{equation*}
 It is well-known that if $K$ is an amenable group, then $\tau$ is faithful.

  \begin{lemma}\label{conditional expectation construction}  Let $(G,P)$ be a quasi-lattice ordered group. Suppose that there exist a group $K$ and a homomorphism $\mu:G\to K$.
Let $$\delta_\mu:C^*(G,P)\to C^*(G,P)\otimes_{\min} C^*(K)$$ be the coaction of Lemma~\ref{faithful coaction exists}.
Then $$\Psi^\mu:=(\id\otimes \tau)\circ\delta_\mu$$ is a conditional expectation of $C^*(G,P)$ with range $\clsp\{w_pw_q^*: \mu(p)=\mu(q)\}$. If $K$ is an amenable group,  then $\Psi^\mu$ is faithful.
\end{lemma}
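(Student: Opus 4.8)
The plan is to realise $\Psi^\mu$ as the composition of the injective homomorphism $\delta_\mu$ with a slice map, read off its action on the spanning elements $w_pw_q^*$, and then extract faithfulness by passing to the regular representation of $K$. First I would record the action on generators. Since $\delta_\mu(w_pw_q^*)=w_pw_q^*\otimes u_{\mu(pq^{-1})}$ and $\tau(u_k)=1$ exactly when $k=e$, applying $\id\otimes\tau$ gives
\[\Psi^\mu(w_pw_q^*)=\tau\big(u_{\mu(pq^{-1})}\big)\,w_pw_q^*=\begin{cases}w_pw_q^*&\text{if }\mu(p)=\mu(q),\\0&\text{otherwise.}\end{cases}\]
Because $C^*(G,P)=\clsp\{w_pw_q^*:p,q\in P\}$ and $\Psi^\mu$ is linear and norm-continuous, this shows at once that $\im\Psi^\mu\subseteq B:=\clsp\{w_pw_q^*:\mu(p)=\mu(q)\}$ and that $\Psi^\mu$ fixes every spanning element of $B$; in particular $\Psi^\mu\circ\Psi^\mu=\Psi^\mu$. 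A short computation with the covariance relation $w_q^*w_r=w_{q^{-1}(q\vee r)}w_{r^{-1}(q\vee r)}^*$ shows that $B$ is a $C^*$-subalgebra, and since the range of an idempotent equals its fixed-point set, the inclusions $\im\Psi^\mu\subseteq B\subseteq\{a:\Psi^\mu(a)=a\}=\im\Psi^\mu$ give $\im\Psi^\mu=B$.

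Next I would argue that $\Psi^\mu$ is a conditional expectation. As $\tau$ is a state, the slice map $\id\otimes\tau:C^*(G,P)\otimes_{\min}C^*(K)\to C^*(G,P)$ is unital and completely positive; composing it with the unital homomorphism $\delta_\mu$ shows $\Psi^\mu$ is unital completely positive, hence contractive. Being in addition an idempotent onto the $C^*$-subalgebra $B$, it is a projection of norm one, so Tomiyama's theorem identifies it as a conditional expectation onto $B$. (Alternatively one can verify the bimodule identity $\Psi^\mu(b_1ab_2)=b_1\Psi^\mu(a)b_2$ for $b_1,b_2\in B$ directly on generators.)

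The substantive part is faithfulness when $K$ is amenable, and here I would use the regular representation. Writing $\tau(x)=(\lambda_x\epsilon_e\mid\epsilon_e)$, I fix a faithful representation $\pi$ of $C^*(G,P)$ on $H$ and form $\pi\otimes\lambda$ on $H\otimes\ell^2(K)$. A computation on elementary tensors gives, for every $b\in C^*(G,P)\otimes_{\min}C^*(K)$ and all $\eta,\eta'\in H$,
\[\big((\id\otimes\tau)(b)\,\eta\mid\eta'\big)=\big((\pi\otimes\lambda)(b)(\eta\otimes\epsilon_e)\mid\eta'\otimes\epsilon_e\big),\]
so taking $b=\delta_\mu(a)^*\delta_\mu(a)$ yields $\big(\Psi^\mu(a^*a)\eta\mid\eta\big)=\|(\pi\otimes\lambda)(\delta_\mu(a))(\eta\otimes\epsilon_e)\|^2$. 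Thus $\Psi^\mu(a^*a)=0$ forces $(\pi\otimes\lambda)(\delta_\mu(a))$ to vanish on the subspace $H\otimes\epsilon_e$.

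The main obstacle is to upgrade this to vanishing on all of $H\otimes\ell^2(K)$. I would exploit that the operators $1\otimes\rho_k$, where $\rho$ is the right regular representation, commute with every element in the range of $\pi\otimes\lambda$ (since $\rho_k$ commutes with each $\lambda_g$), and that $\eta\otimes\epsilon_k=(1\otimes\rho_{k^{-1}})(\eta\otimes\epsilon_e)$. Hence $(\pi\otimes\lambda)(\delta_\mu(a))$ annihilates every $\eta\otimes\epsilon_k$, and as these span $H\otimes\ell^2(K)$ the operator is zero. Finally, amenability of $K$ makes $\lambda:C^*(K)\to C^*_r(K)$ faithful, so $\pi\otimes\lambda$ is faithful on the minimal tensor product; therefore $\delta_\mu(a)=0$, and the injectivity of $\delta_\mu$ from Lemma~\ref{faithful coaction exists} gives $a=0$. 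This last implication is precisely where amenability is indispensable: without it $\lambda$ need not be faithful on $C^*(K)$ and the argument collapses.
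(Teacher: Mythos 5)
Your proposal is correct, and while the first half (the formula on generators, idempotence, Tomiyama, and the identification of the range) matches the paper's proof in substance -- the paper establishes $\im\Psi^\mu=\clsp\{w_pw_q^*:\mu(p)=\mu(q)\}$ by an explicit $\epsilon$-approximation argument, whereas your fixed-point-set observation $\im\Psi^\mu\subseteq B\subseteq\{a:\Psi^\mu(a)=a\}=\im\Psi^\mu$ is a slightly cleaner packaging of the same fact -- your faithfulness argument takes a genuinely different route. The paper works at the level of functionals: from $\Psi^\mu(a^*a)=0$ it deduces $\tau\circ(f\otimes\id)\circ\delta_\mu(a^*a)=0$ for every state $f$, uses faithfulness of the trace $\tau$ on $C^*(K)$ (the point where amenability enters) to kill the positive slice $(f\otimes\id)(\delta_\mu(a^*a))$, concludes that all product states $f\otimes g$ vanish on $\delta_\mu(a^*a)$, and then checks via vector states on a faithful product representation $\pi_1\otimes\pi_2$ that this forces $\delta_\mu(a^*a)=0$. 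You instead work spatially: the identity $\bigl(\pi(\Psi^\mu(a^*a))\eta\mid\eta\bigr)=\|(\pi\otimes\lambda)(\delta_\mu(a))(\eta\otimes\epsilon_e)\|^2$ (valid by continuity and density once checked on elementary tensors, using $\tau(c)=(\lambda(c)\epsilon_e\mid\epsilon_e)$; note the left side should strictly read $\pi$ applied to the slice, a harmless notational slip) shows the operator vanishes on $H\otimes\epsilon_e$, and the commutant trick with $1\otimes\rho_k$ propagates this to all of $H\otimes\ell^2(K)$, after which faithfulness of $\pi\otimes\lambda$ on the minimal tensor product -- where amenability enters for you, via faithfulness of $\lambda$ on $C^*(K)$ rather than of $\tau$ -- gives $\delta_\mu(a)=0$. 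These two entry points of amenability are equivalent, and both proofs end identically with the injectivity of $\delta_\mu$ from the previous lemma. Your route buys a concrete spatial picture ($\Psi^\mu$ is compression to the cyclic subspace $H\otimes\epsilon_e$) and avoids the paper's step of verifying that product states separate points; the paper's route avoids the right-regular-representation commutation argument and stays entirely at the level of states and slices.
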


\begin{proof}
Since $\id\otimes \tau$ and $\delta_\mu$ are linear and norm decreasing,  so is $\Psi^\mu$. Since $\Psi^\mu(w_e)=1$ the norm of $\Psi^\mu$ is $1$.   We have
\begin{equation}\label{expectation construction eqn}
 \Psi^\mu(w_pw^*_q)=\begin{cases}w_pw^*_q&\text{if $\mu(p)=\mu(q)$}\\0&\text{otherwise,}\end{cases}
\end{equation}
and hence $\Psi^\mu\circ\Psi^\mu=\Psi^\mu$.  Thus $\Psi^\mu$ is a conditional expectation by \cite{Tomiyama1957}.

From \eqref{expectation construction eqn} we see that $\overline{\lsp}\{w_pw^*_q:\mu(q)=\mu(p)\}\subseteq \im \Psi^\mu$. To show the reverse inclusion, fix $b\in \im \Psi^\mu$, say $b=\Psi^\mu(a)$ for some $a\in C^*(G,P)$. Also  fix $\epsilon>0$. There exists a finite subset $F\subseteq P\times P$  such that $\|a-\sum_{(p,q)\in F}\lambda_{p,q}w_{p}w^*_{q}\|<\epsilon$.
Since $\Psi^\mu$ is linear and norm-decreasing,
\begin{align*}
\epsilon&>\Big\|a-\sum_{(p,q)\in F}\lambda_{p,q}w_{p}w^*_{q}\Big\|
\geq \Big\|\Psi^\mu\Big(a-\sum_{(p,q)\in F}\lambda_{p,q}w_{p}w^*_{q}\Big)\Big\|\\
&
=\Big\|\Psi^\mu(a)-\Psi^\mu\Big(\sum_{(p,q)\in F}\lambda_{p,q}w_{p}w^*_{q}\Big)\Big\|  =\Big\|b-\sum_{(p,q)\in F\text{, }\mu(p)=\mu(q)}\lambda_{p,q}w_{p}w^*_{q}\Big\|.
\end{align*}
 Thus $b\in  \overline{\lsp}\{w_pw^*_q:\mu(q)=\mu(p)\}$, and  $\im \Psi^\mu=\overline{\lsp}\{w_pw^*_q:\mu(q)=\mu(p)\}$.

 Now suppose that $K$ is amenable. To see that $\Psi^\mu$ is faithful, we follow the proof of  \cite[Lemma 6.5]{LacaRaeburn1996}. Let  $a\in  C^*(G,P)$ and suppose that  $\Psi^\mu(a^*a)=0$. Let $f$ be an arbitrary state on $C^*(G,P)$. Then
    \begin{align*}
           0&=f(\Psi^\mu(a^*a))
           =f\circ(\id\otimes \tau)\circ \delta_\mu(a^*a)\\
           &=(f\otimes \tau)\circ \delta_\mu(a^*a)
           =\tau\circ(f\otimes \id)\circ\delta_\mu(a^*a).
          \end{align*}
 Since $K$ is amenable, $\tau$ is faithful. Hence $(f\otimes \id)\circ\delta(a^*a)=0$. This  implies that for all states $f$ on $C^*(G,P)$ and states $g$ on $C^*(K)$, $$g\circ (f\otimes \id)\circ\delta_\mu(a^*a)=(f\otimes g)\circ\delta_\mu(a^*a)=0.$$

 To see that $\delta_\mu(a^*a)=0$, let $\pi_1:C^*(G,P)\to H_1$ and $\pi_2:C^*(K)\to H_2$ be faithful representations. Then $\pi_1\otimes\pi_2$ is a faithful representation of $C^*(G,P)\otimes_{\min}C^*(K)$ on $B(H_1\otimes H_2)$ by \cite[Corollary~B.11]{RaeburnWilliams1998}.  Fix  unit vectors $h\in H_1$, $k\in H_2$.  There exists a state $f_h\otimes f_k$ on $C^*(G,P)\otimes_{\min}C^*(K)$ defined by $$f_h\otimes f_k(x)=(\pi_1\otimes\pi_2(x)(h\otimes k)\mid h\otimes k).$$
Since $(f\otimes g)\circ\delta_\mu(a^*a)=0$ for all states $f$ of $C^*(G, P)$ and $g$ of $C^*(K)$, we have
\begin{align*}
  0&=f_h\otimes f_k(\delta_\mu(a^*a))\\
  &=(\pi_1\otimes\pi_2(\delta_\mu(a^*a))(h\otimes k)\mid h\otimes k)\\
  &=(\pi_1\otimes\pi_2(\delta_\mu(a))(h\otimes k)\mid \pi_1\otimes\pi_2(\delta_\mu(a)) h\otimes k)\\
  &=\|\pi_1\otimes\pi_2(\delta_\mu(a))(h\otimes k)\|^2.
\end{align*}
Hence $\pi_1\otimes\pi_2(\delta_\mu(a^*a))=0$. Since $\pi_1\otimes \pi_2$ is faithful, $\delta_\mu(a^*a))=0$.
 But $\delta_\mu$ is injective, and hence    $a=0$, and  $\Psi^\mu$ is faithful.
\end{proof}

Next we investigate the structure of
\[
\im\Psi^\mu=\clsp\{w_pw_q^*:\mu(p)=\mu(q)\}.
\]
In the statement of the next lemma, note that we can view $C^*\big(\mu^{-1}(e),\mu^{-1}(e)\cap P\big)$ as a $C^*$-subalgebra of $C^*(G,P)$.

  \begin{lemma}
     Let $(G,P)$ and $(K,Q)$ be quasi-lattice ordered groups, and suppose that  $\mu:G\to K$ a controlled map. Let $k\in Q$, and let $F$ be a finite subset of $\Sigma_k$. Let $$B_{k,F}:=\lsp\{w_\sigma Dw^*_\tau:\sigma,\tau\in F,D\in C^*\big(\mu^{-1}(e),\mu^{-1}(e)\cap P\big)\}.$$
     Then $B_{k,F}$ is a closed $C^*$-subalgebra of $\clsp\{w_pw_q^*:\mu(p)=\mu(q)\}$.
  \end{lemma}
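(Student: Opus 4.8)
The plan is to identify $B_{k,F}$ with a matrix algebra over $C^*\big(\mu^{-1}(e),\mu^{-1}(e)\cap P\big)$. Write $F=\{\sigma_1,\dots,\sigma_n\}$, set $S_i:=w_{\sigma_i}$, and let $\mathcal{D}:=C^*\big(\mu^{-1}(e),\mu^{-1}(e)\cap P\big)$, viewed as a subalgebra of $C^*(G,P)$. The crucial observation is that the isometries $S_1,\dots,S_n$ have orthogonal ranges: since the $\sigma_i$ are distinct elements of $\Sigma_k$, property~\eqref{cm-3} gives $\sigma_i\vee\sigma_j=\infty$ for $i\neq j$, so the covariance relation for $w$ yields $S_i^*S_j=0$; and $S_i^*S_i=1$ because $S_i$ is an isometry. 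Hence $S_i^*S_j=\delta_{ij}1$.

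First I would check the containment $B_{k,F}\subseteq\clsp\{w_pw_q^*:\mu(p)=\mu(q)\}$. A typical generator $w_\sigma D w_\tau^*$, with $D$ a norm-limit of linear combinations of $w_aw_b^*$ for $a,b\in\mu^{-1}(e)\cap P$, is a limit of combinations of $w_{\sigma a}w_{\tau b}^*$; and $\mu(\sigma a)=k=\mu(\tau b)$ since $\mu(a)=\mu(b)=e$. The containment then follows by linearity and continuity.

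Next I would define $\Phi:M_n(\mathcal{D})\to C^*(G,P)$ by $\Phi\big((D_{ij})\big)=\sum_{i,j}S_iD_{ij}S_j^*$, whose range is exactly $B_{k,F}$. Using $S_j^*S_k=\delta_{jk}1$, a short computation shows $\Phi$ is multiplicative: in the product $\Phi((D_{ij}))\Phi((D'_{ij}))$ the middle factors collapse to scalars, leaving $\sum_{i,l}S_i\big(\sum_j D_{ij}D'_{jl}\big)S_l^*=\Phi\big((D_{ij})(D'_{ij})\big)$. Self-adjointness $\Phi((D_{ij}))^*=\Phi((D_{ij})^*)$ follows by relabelling indices, using that the $(i,j)$ entry of the adjoint matrix is $D_{ji}^*$ and that $\mathcal{D}$ is closed under adjoints. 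For injectivity I would compress: if $\Phi((D_{ij}))=0$, then $S_a^*\Phi((D_{ij}))S_b=D_{ab}=0$ for all $a,b$, so the matrix vanishes. Thus $\Phi$ is an injective $*$-homomorphism between $C^*$-algebras, hence isometric, so its range $B_{k,F}$ is complete and therefore a closed $C^*$-subalgebra of $\clsp\{w_pw_q^*:\mu(p)=\mu(q)\}$.

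The only real obstacle is the orthogonality relation $S_i^*S_j=\delta_{ij}1$; everything afterwards is the routine identification of the $*$-algebra generated by finitely many orthogonal isometries over a coefficient algebra with a matrix algebra. That relation is exactly where the minimality condition~\eqref{cm-3} on $\Sigma_k$ is used, and it is what forces the cross terms $S_iD_{ij}S_j^*\cdot S_kD'_{kl}S_l^*$ to vanish unless $j=k$, making $\Phi$ simultaneously multiplicative and injective. The finiteness of $F$ is essential here, since it is what makes $M_n(\mathcal{D})$ a genuine $C^*$-algebra and hence lets us conclude completeness of the range.
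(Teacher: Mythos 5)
Your proof is correct, and it rests on the same structural fact as the paper's: condition \eqref{cm-3} of Definition~\ref{expanded controlled map definition} forces $\sigma\vee\tau=\infty$ for distinct $\sigma,\tau\in F$, whence covariance gives $w_\sigma^*w_\tau=\delta_{\sigma\tau}1$ and $B_{k,F}$ gets identified with a matrix algebra over $\mathcal{D}=C^*\big(\mu^{-1}(e),\mu^{-1}(e)\cap P\big)$. The implementations differ, though. The paper builds two commuting homomorphisms --- $\theta:M_F(\C)\to\overline{B_{k,F}}$ from the matrix units $w_\sigma w_\tau^*$, and $\psi:D\mapsto\sum_{\gamma\in F}w_\gamma Dw_\gamma^*$ --- invokes the universal property of the maximal tensor product to obtain $\theta\otimes_{\max}\psi$ on $M_F(\C)\otimes_{\max}\mathcal{D}$, and then cites \cite[Theorem B.18]{RaeburnWilliams1998} to see that this tensor product equals the algebraic span of the $E_{\sigma,\tau}\otimes D$ with no closure, so that the range of the homomorphism is exactly $B_{k,F}$ and hence closed. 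You instead write down $\Phi:(D_{ij})\mapsto\sum_{i,j}S_iD_{ij}S_j^*$ on $M_n(\mathcal{D})$ directly, verify multiplicativity and $*$-preservation by hand from $S_j^*S_k=\delta_{jk}1$, and prove injectivity by compressing with $S_a^*(\cdot)S_b$, so that $\Phi$ is isometric and its range --- exactly $B_{k,F}$, since any finite combination of generators assembles into a single matrix --- is complete. Your route is more elementary: it avoids tensor-product machinery and the appeal to Theorem B.18 altogether, the span-with-no-closure fact being replaced by the observation that $M_n(\mathcal{D})$ is already a $C^*$-algebra. It also delivers slightly more, namely the injectivity of the identification $B_{k,F}\cong M_F(\mathcal{D})$, which the paper announces (``isomorphic'') but never checks --- nor needs, since a $*$-homomorphism between $C^*$-algebras has closed range regardless; for the same reason your injectivity step could be dropped, but keeping it makes the completeness of the range self-contained.
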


  \begin{proof}  To see that $B_{k,F}$ is contained in  $\clsp\{w_pw_q^*:\mu(p)=\mu(q)\}$, it suffices to consider $D$ of the form $w_\alpha w_\beta^*$ where $\mu(\alpha)=\mu(\beta)=e$. Then  $w_\sigma Dw^*_\tau=w_{\sigma\alpha}w_{\tau\beta}^*$ and $\mu(\sigma\alpha)=\mu(\sigma)=k=\mu(\tau)=\mu(\tau\beta)$. Thus $B_{k,F}$ is a subalgebra of $\clsp\{w_pw_q^*:\mu(p)=\mu(q)\}$.

 We will prove the lemma by showing that $B_{k,F}$ is  isomorphic to
    \[
    M_{F}(\C)\otimes C^*\big(\mu^{-1}(e),\mu^{-1}(e)\cap P\big).
    \]
By Definition~\ref{expanded controlled map definition}\eqref{cm-3}, the elements of $F$ have no common upper bound unless they are equal. So
    $$(w_{\sigma}w^*_{\tau})(w_{\gamma}w^*_{\delta})=w_{\sigma\tau^{-1}(\tau\vee\gamma)}w^*_{\delta\gamma^{-1}(\tau\vee\gamma)}=\begin{cases}w_{\sigma}w^*_{\delta}&\text{if $\tau=\gamma$}\\0&\text{otherwise.}\end{cases}
    $$
    Thus $\{w_{\sigma}w^*_{\tau}:\sigma,\tau\in F\}$  is a set of matrix units in the $C^*$-algebra $\overline{B_{k,F}}$. This gives a homomorphism $\theta:M_{F}(\C)\to \overline{B_{k,F}}$ which maps the  matrix units  $\{E_{\sigma,\tau}:\sigma,\tau\in F\}$  in $M_{F}(\C)$ to $\{w_{\sigma}w^*_{\tau}:\sigma,\tau\in F\}\subset {B_{k,F}}$. It is  easy to check that the formula   $$\psi(D)=\sum_{\gamma\in F}w_{\gamma}Dw^*_{\gamma}$$ gives  a homomorphism $\psi:C^*\big(\mu^{-1}(e),\mu^{-1}(e)\cap P\big)\to B_{k,F}$. We have
    \begin{align*}
      \theta(E_{\sigma,\tau})\psi(D)&=w_{\sigma}w^*_{\tau}\sum_{\gamma\in F}w_{\gamma}Dw^*_{\gamma}\\
      &=w_{\sigma}w^*_{\tau}w_{\tau}Dw^*_{\tau}\quad\text{($w^*_{\tau}w_{\gamma}=0$ unless $\tau=\gamma$)}\\
      &=w_{\sigma}Dw^*_{\tau}\\
      &=(\sum_{\gamma\in F}w_{\gamma}Dw^*_{\gamma})w_{\sigma}w^*_{\tau}\\
      &=\psi(D)\theta(E_{\sigma,\tau}).
    \end{align*}
    Each $A\in M_{F}(\C)$ is a linear combination of the $E_{\sigma,\tau}$, and hence $\psi(D)\theta(A)=\theta(A)\psi(D)$ for all $A\in M_{F}(\C)$ and $D\in C^*\big(\mu^{-1}(e),\mu^{-1}(e)\cap P\big)$. Since the ranges of $\theta$ and $\psi$ commute, the universal property of the maximal tensor product gives a homomorphism $\theta\otimes_{\max} \psi$ of $M_{F}(\C)\otimes_{\max}C^*\big(\mu^{-1}(e),\mu^{-1}(e)\cap P\big)$ into $\overline{B_{k,F}}$.

By \cite[Theorem B.18]{RaeburnWilliams1998} 
\begin{align*}
M_{F}(\C)\otimes&_{\max}C^*\big(\mu^{-1}(e),\mu^{-1}(e)\cap P\big)\\
&=\lsp\{E_{\sigma,\tau}\otimes D: \sigma,\tau\in F \text{\ and\ } D\in C^*\big(\mu^{-1}(e),\mu^{-1}(e)\cap P\big) \},\end{align*}
 with no closure. So  the range of $\theta\otimes_{\max} \psi$ is spanned by $\theta(E_{\sigma,\tau})\psi(D)=w_{\sigma}Dw^*_{\tau}$ and hence  is $B_{k,F}$. Thus $B_{k,F}$ is a closed $C^*$-subalgebra of $\clsp\{w_pw_q^*:\mu(p)=\mu(q)\}$.
\end{proof}

Let $\{\epsilon_x:x\in P\}$ be the usual basis for $\ell^2(P)$. Let $T$ be the covariant  representation of $(G,P)$ on $\ell^2(P)$ such that $T_p\epsilon_x=\epsilon_{px}$, and let $\pi_T$ be the corresponding homomorphism of $C^*(G,P)$ onto $C^*_r(G,P)$ such that $\pi_T(w_p)=T_p$.
For $k\in Q$ we  consider the subspaces
\[
H_k:=\{\epsilon_{\gamma z}:\gamma\in\Sigma_k,z\in \mu^{-1}(e)\cap P\}.
\]

\begin{lemma}\label{isometric on H_k} Let $(G,P)$ and $(K,Q)$ be quasi-lattice ordered groups, and suppose that $\mu:G\to K$ is a controlled map.
Let $k\in Q$. Then
  \begin{enumerate}
    \item $H_k$ is invariant for $\pi_T|_{B_{k,F}}$;
    \item\label{isometric on H_k2} if $\big(\mu^{-1}(e),\mu^{-1}(e)\cap P\big)$ is amenable, then $\pi_T|_{B_{k,F}}$ is isometric on $H_k$.
  \end{enumerate}
  \end{lemma}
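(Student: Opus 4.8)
The plan is to realise the compressed representation $b\mapsto\pi_T(b)|_{H_k}$ as an amplification of the Fock representation of the subsystem $(\mu^{-1}(e),\mu^{-1}(e)\cap P)$, and then to read off both invariance and isometry from its matrix-unit structure. Write $N=\mu^{-1}(e)$. First I would record the orthogonal decomposition $H_k=\bigoplus_{\gamma\in\Sigma_k}H_k^\gamma$, where $H_k^\gamma=\clsp\{\epsilon_{\gamma z}:z\in N\cap P\}$; the summands are genuinely orthogonal because $\gamma z=\gamma' z'$ with $\gamma,\gamma'\in\Sigma_k$ forces $\gamma z$ to be a common upper bound of $\gamma$ and $\gamma'$, whence $\gamma\vee\gamma'<\infty$ and $\gamma=\gamma'$ by Definition~\ref{expanded controlled map definition}\eqref{cm-3}. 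Each map $U_\gamma:H_k^\gamma\to\ell^2(N\cap P)$, $\epsilon_{\gamma z}\mapsto\epsilon_z$, is then unitary.

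The computation driving everything is $T_\tau^*\epsilon_{\gamma z}$ for $\sigma,\tau,\gamma\in\Sigma_k$ and $z\in N\cap P$. Since $T_\tau^*\epsilon_x=\epsilon_{\tau^{-1}x}$ when $\tau\le x$ and is $0$ otherwise, and since $\tau\le\gamma z$ again makes $\gamma z$ a common upper bound of $\tau$ and $\gamma$, property~\eqref{cm-3} gives $T_\tau^*\epsilon_{\gamma z}=\delta_{\tau,\gamma}\,\epsilon_z$. Next I would observe that $\ell^2(N\cap P)$ is a reducing subspace for $\pi_T\big(C^*(N,N\cap P)\big)$ and that the restriction agrees on the generators $w_a$ ($a\in N\cap P$) with the Fock representation $\pi_S$ of $(N,N\cap P)$ on $\ell^2(N\cap P)$; hence $\pi_T(D)|_{\ell^2(N\cap P)}=\pi_S(D)$ for every $D\in C^*(N,N\cap P)$. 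Combining these with $T_\sigma\epsilon_{z'}=\epsilon_{\sigma z'}\in H_k^\sigma$, I obtain for a spanning element $\pi_T(w_\sigma Dw_\tau^*)\epsilon_{\gamma z}=\delta_{\tau,\gamma}\,U_\sigma^*\pi_S(D)U_\tau\,\epsilon_{\gamma z}$. In particular the image lies in $H_k^\sigma\subseteq H_k$, so after extending by linearity and continuity ($H_k$ being closed) this proves part~(1).

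For part~(2) I would transport the last formula through the isomorphism $B_{k,F}\cong M_F(\C)\otimes C^*(N,N\cap P)$ established above, under which $E_{\sigma,\tau}\otimes D$ corresponds to $w_\sigma Dw_\tau^*$. Under the identification $\bigoplus_{\gamma\in F}H_k^\gamma\cong\C^F\otimes\ell^2(N\cap P)$ supplied by the $U_\gamma$, the formula says exactly that $\pi_T(w_\sigma Dw_\tau^*)|_{H_k}$ acts as $E_{\sigma,\tau}\otimes\pi_S(D)$ on this block and as $0$ on $\bigoplus_{\gamma\in\Sigma_k\setminus F}H_k^\gamma$. Thus the $*$-homomorphism $\rho:=\pi_T(\cdot)|_{H_k}$ on $B_{k,F}$ is, up to an isometric block embedding, the map $\id_{M_F}\otimes\pi_S$. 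If $(N,N\cap P)$ is amenable then $\pi_S$ is faithful, hence isometric; since $M_F(\C)$ is finite dimensional, $\id_{M_F}\otimes\pi_S$ is then injective and therefore isometric, and so is $\rho$. This is precisely the assertion that $\pi_T|_{B_{k,F}}$ is isometric on $H_k$.

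The routine parts are the two adjoint computations and the bookkeeping of the direct-sum identification. The step deserving care, and the likely obstacle, is the claim that the restriction of $\pi_T$ to $\ell^2(N\cap P)$ is the \emph{full} Fock representation $\pi_S$ of $(N,N\cap P)$ rather than a proper subrepresentation: one must check that the order on $N\cap P$ inherited from $P$ coincides with its intrinsic order (for $a,z\in N\cap P$, the element $a^{-1}z$ already lies in $N$, so $a\le z$ in $P$ is the same as $a\le z$ in $N\cap P$). This coincidence is what makes $\ell^2(N\cap P)$ invariant under the adjoints $T_a^*$ and yields $\pi_T(D)|_{\ell^2(N\cap P)}=\pi_S(D)$, so that the amenability hypothesis on $(N,N\cap P)$ can legitimately be invoked.
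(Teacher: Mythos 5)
Your proof is correct, and it differs from the paper's mainly in packaging rather than in substance: both arguments rest on exactly the same two facts, namely that property \eqref{cm-3} of Definition~\ref{expanded controlled map definition} forces $T_\tau^*\epsilon_{\gamma z}=\delta_{\tau,\gamma}\,\epsilon_z$ for $\tau,\gamma\in\Sigma_k$, and that the restriction of $\pi_T|_{C^*(\mu^{-1}(e),\mu^{-1}(e)\cap P)}$ to $H_e$ (which is $\ell^2(\mu^{-1}(e)\cap P)$, since $\Sigma_e=\{e\}$) is the Toeplitz representation of the kernel system, faithful by amenability. The paper deploys these facts locally: for $B=\sum_{\sigma,\tau\in F}w_\sigma D_{\sigma,\tau}w_\tau^*$ it compresses, computing $T_\gamma^*\pi_T(B)T_\delta=\pi_T(D_{\gamma,\delta})$, restricts to $H_e$, and kills the entries $D_{\gamma,\delta}$ one at a time. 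You deploy them globally: your unitaries $U_\gamma$ assemble the compressed map $\rho(b)=\pi_T(b)|_{H_k}$ into $(\id_{M_F}\otimes\pi_S)\oplus 0$, with injectivity following because tensoring a faithful representation with $\id$ on the finite-dimensional $M_F(\C)$ preserves injectivity. Your version buys two things: it makes explicit the order-coincidence point (for $a,z\in\mu^{-1}(e)\cap P$, $a\le z$ in $P$ iff $a^{-1}z\in\mu^{-1}(e)\cap P$) that the paper uses silently, and the identity $\rho\circ(\theta\otimes_{\max}\psi)=\id_{M_F}\otimes\pi_S$ (up to the block embedding) incidentally proves injectivity of $\theta\otimes_{\max}\psi$, which the written proof of the $B_{k,F}$ lemma does not actually establish --- so you should phrase this step as a property of the composite rather than citing ``the isomorphism established above'': literally only a surjection $\Phi=\theta\otimes_{\max}\psi$ was constructed, but your argument needs no more, since $\rho(\Phi(x))=0$ with $\id_{M_F}\otimes\pi_S$ injective gives $x=0$, hence $\rho$ is injective on $B_{k,F}=\im\Phi$. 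What the paper's compression argument buys in exchange is brevity: it avoids the direct-sum and unitary bookkeeping and needs nothing about tensor products beyond the spanning description of $B_{k,F}$.
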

  \begin{proof}
    For   (1), let $\sigma,\tau\in F$ and let $x,y\in\mu^{-1}(e)$ and let $\epsilon_{\gamma z}\in H_k$. Then $w_{\sigma}w_{x}w^*_yw^*_{\tau}$ is a spanning element of
 $B_{k,F}$.  Since $\mu(\tau)=k=\mu(\gamma)$ we have
    $$
    \pi_T(w_{\sigma}w_{x}w^*_yw^*_{\tau})\epsilon_{\gamma z}=
    \begin{cases}\epsilon_{\sigma xy^{-1} z}&\text{if $\gamma=\tau$ and $y\leq z$}\\0&\text{otherwise.}\end{cases}
    $$
If  $\pi_T(w_{\sigma}w_{x}w^*_yw^*_{\tau})\epsilon_{\gamma z}=0$ we are done. Otherwise, to see that $\epsilon_{\sigma xy^{-1} z}$ is back in $H_k$, suppose that $y\leq z$. Then $y^{-1}z\in P$. Since $\sigma x\in \mu^{-1}(k)\cap P$ we have $\epsilon_{(\sigma x)(y^{-1} z)}\in H_k$. It follows that $H_k$ is invariant for $\pi_T|_{B_{k,F}}$.

    For (2) suppose that $\big(\mu^{-1}(e),\mu^{-1}(e)\cap P\big)$ is amenable.  We will show that $\pi_T|_{B_{k,F}}$ is faithful on $H_k$. Take $B=\sum_{\sigma,\tau\in F} w_{\sigma}D_{\sigma,\tau}w^*_{\tau}\in B_{k,F}$ and suppose that $\pi_T(B)|_{H_k}=0$. Fix $\gamma,\delta\in F$. Then
$$T^*_{\gamma}\pi_T(B)T_{\delta}=\pi_T(w^*_{\gamma})\pi_T(B)\pi_T(w_{\delta})=\pi_T(D_{\gamma,\delta})$$
    Since $T_{\delta}$ is an injection from $H_{e}$ to $H_k$ and $\pi_T(B)|_{H_k}=0$, it follows that  $\pi_T(B)T_{\delta}|_{H_e}=0$. Thus $\pi_T(D_{\gamma,\delta})|_{H_e}=0$.

    But the restriction
    \[
    (\pi_T|_{C^*\big(\mu^{-1}(e),\mu^{-1}(e)\cap P\big)})|_{H_e}
    \]
     is the Toeplitz representation of $\big(\mu^{-1}(e),\mu^{-1}(e)\cap P\big)$, and hence  is  faithful by amenability.  Thus $D_{\gamma,\delta}=0$. Repeating the argument finitely many times shows that all the $D_{\sigma,\tau}=0$ and hence that $B=0$. Thus $\pi_T|_{B_{k,F}}$ is faithful on $H_k$, and therefore is  isometric.
  \end{proof}

  \begin{lemma}\label{faithful on Bk}
  Let $(G,P)$ and $(K,Q)$ be quasi-lattice ordered groups, and suppose that  $\mu:G\to K$ is a controlled map.
  Let $B_k=\overline{\lsp}\{w_pw^*_q:\mu(p)=\mu(q)=k\}$. Let $\calF$ be the set of all finite sets $F\subseteq \Sigma_k$. Then $B_k=\overline{\cup_{F\in\calF}B_{k,F}}$.
If $\big(\mu^{-1}(e),\mu^{-1}(e)\cap P\big)$ is amenable, then $\pi_T|_{B_{k}}$ is isometric on $H_k$.
  \end{lemma}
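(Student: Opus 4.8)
The plan is to treat the two assertions in turn: first establish the set equality $B_k=\overline{\cup_{F\in\calF}B_{k,F}}$, and then bootstrap the isometry from the finite pieces $B_{k,F}$ to all of $B_k$ by a density-and-continuity argument.

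For the equality, I would first note that $\calF$ is directed by inclusion and that $F_1\subseteq F_2$ implies $B_{k,F_1}\subseteq B_{k,F_2}$, so $\cup_{F\in\calF}B_{k,F}$ is an increasing union of subspaces and is itself a linear subspace (this is what makes its closure the closed span). The inclusion $\overline{\cup_F B_{k,F}}\subseteq B_k$ is immediate from the previous lemma: there it was computed that a spanning element $w_\sigma w_\alpha w^*_\beta w^*_\tau$ of $B_{k,F}$ equals $w_{\sigma\alpha}w^*_{\tau\beta}$ with $\mu(\sigma\alpha)=\mu(\tau\beta)=k$, so $B_{k,F}\subseteq B_k$, and $B_k$ is closed. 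For the reverse inclusion it suffices to put each spanning element $w_pw^*_q$ with $\mu(p)=\mu(q)=k$ into some $B_{k,F}$. Here completeness, Definition~\ref{expanded controlled map definition}\eqref{cm-2}, is the key: I would choose $\sigma,\tau\in\Sigma_k$ with $\sigma\leq p$ and $\tau\leq q$, so that $\sigma^{-1}p,\tau^{-1}q\in P$, and since $\mu$ is a homomorphism $\mu(\sigma^{-1}p)=\mu(\tau^{-1}q)=e$. Hence $D:=w_{\sigma^{-1}p}w^*_{\tau^{-1}q}\in C^*\big(\mu^{-1}(e),\mu^{-1}(e)\cap P\big)$, and factoring $w_p=w_\sigma w_{\sigma^{-1}p}$, $w_q=w_\tau w_{\tau^{-1}q}$ gives $w_pw^*_q=w_\sigma D w^*_\tau\in B_{k,\{\sigma,\tau\}}$. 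Taking closed linear spans yields $B_k\subseteq\overline{\cup_F B_{k,F}}$, hence equality.

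For the isometry statement I would exploit the density just obtained. By Lemma~\ref{isometric on H_k}, $H_k$ is invariant for each $\pi_T(B_{k,F})$; since $B_{k,F}$ is a $*$-subalgebra it is in fact reducing, and under amenability of $\big(\mu^{-1}(e),\mu^{-1}(e)\cap P\big)$ the restriction $B\mapsto\pi_T(B)|_{H_k}$ is isometric on each $B_{k,F}$. A limiting argument then shows $H_k$ is reducing for all of $\pi_T(B_k)$, so $B\mapsto\pi_T(B)|_{H_k}$ is a well-defined contractive map on $B_k=\overline{\cup_F B_{k,F}}$ which is isometric on the dense subalgebra $\cup_F B_{k,F}$. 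Since $\pi_T$ is contractive, both $B\mapsto\|B\|$ and $B\mapsto\|\pi_T(B)|_{H_k}\|$ are norm-continuous; approximating an arbitrary $B\in B_k$ by $B_n\in B_{k,F_n}$ gives $\|\pi_T(B)|_{H_k}\|=\lim_n\|\pi_T(B_n)|_{H_k}\|=\lim_n\|B_n\|=\|B\|$. Thus $\pi_T|_{B_k}$ is isometric on $H_k$.

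The substantive input is the finite-level isometry from the preceding lemma, which is exactly where amenability of the kernel enters; the two steps carried out here are both soft. The point requiring care in the first part is the factorisation: it is precisely property \eqref{cm-2} that produces minimal $\sigma,\tau$ below $p,q$ and lets one peel off $w_\sigma,w_\tau$ while leaving a factor $D$ supported on the kernel. In the second part one must confirm that $\cup_F B_{k,F}$ is genuinely a subspace and that the restriction-to-$H_k$ map is operator-norm continuous, which is what allows the isometry to pass to the limit. I do not anticipate a serious obstacle.
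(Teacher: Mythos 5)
Your proof is correct and follows essentially the same route as the paper: the same directed-union structure on $\calF$, the same use of Definition~\ref{expanded controlled map definition}\eqref{cm-2} to factor $w_pw^*_q=w_\sigma D w^*_\tau$ with $D\in C^*\big(\mu^{-1}(e),\mu^{-1}(e)\cap P\big)$, and the same appeal to Lemma~\ref{isometric on H_k} on each $B_{k,F}$. Your only departure is to spell out the density-and-continuity step that the paper compresses into ``its extension to the closure is also isometric,'' which is a harmless (indeed welcome) elaboration rather than a different argument.
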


  \begin{proof}
    Observe that $\calF$ is a directed set partially ordered by inclusion with $E,F\in \calF$ majorismajorizeded by $E\cup F$. If $E\subseteq F$, then $B_{k,E}\subseteq B_{k,F}$. Thus $\{B_{k,F}:F\in \calF\}$ is an inductive system with limit $\overline{\cup_{F\in\calF}B_{k,F}}$.

    For each $F\in\calF$ we have $B_{k,F}\subseteq B_k$, and $B_k$ is closed. Therefore $\overline{\cup_{F\in\calF}B_{k,F}}\subseteq B_k$. To prove the reverse inclusion it suffices to show that the spanning elements of $B_k$ are in $B_{k,F}$ for some $F$. Fix $p,q\in P$ such that $\mu(p)=\mu(q)=k$ and consider $w_pw^*_q$.  By Definition~\ref{expanded controlled map definition}\eqref{cm-2}, the set $\Sigma_k$  of minimal elements is complete, and  there exists $\sigma,\tau\in\Sigma_k$ such that $\sigma\leq p$ and $\tau \leq q$.   Hence there exists $x,y\in  P$ such that
$p=\sigma x$ and $q=\tau y$.
Thus $w_pw^*_q=w_{\sigma x}w^*_{\tau y}=w_{\sigma}(w_xw^*_y)w^*_{\tau}$ and  $w_xw^*_y\in  C^*\big(\mu^{-1}(e),\mu^{-1}(e)\cap P\big)$. Since $\{\sigma,\tau\}\in\calF$ we have  $w_pw^*_q\in B_{k,\{\sigma,\tau\}}$. Thus  $B_k\subseteq \overline{\cup_{F\in\calF}B_{k,F}}$, and equality follows.

Finally, suppose that $\big(\mu^{-1}(e),\mu^{-1}(e)\cap P\big)$ is amenable. Then $\pi_T|_{B_{k,F}}$ is isometric on $H_k$ for all $F\in\calF$ by Lemma~\ref{isometric on H_k}\eqref{isometric on H_k2}. Since $\pi_T$ is isometric on every $B_{k,F}$,  its extension to the closure is also isometric.
  \end{proof}

  Let $\calI$ be the set of all finite sets $I\subset Q$ that are closed under $\vee$ in the sense that $s,t\in I$ and $s\vee t<\infty$ implies that $s\vee t\in I$.

  \begin{lemma}\label{core is limit of CI}
  Let $(G,P)$ and $(K,Q)$ be quasi-lattice ordered groups, and suppose that  $\mu:G\to K$  is a controlled map.
  For each $I\in\calI$ let \[C_I=\overline\lsp\{w_pw^*_q:\mu(p)=\mu(q)\in I\}.\] Then $C_I$ is a $C^*$-subalgebra of $\clsp\{w_pw_q^*:\mu(p)=\mu(q)\}$, $C_I=\lsp\{B_k:k\in I\}$ and $\clsp\{w_pw_q^*:\mu(p)=\mu(q)\}=\overline{\cup_{I\in\calI}C_I}$.
  \end{lemma}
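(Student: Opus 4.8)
The plan is to reduce all three assertions to a single multiplication rule for the building blocks $B_k$ of Lemma~\ref{faithful on Bk}, namely
$B_kB_\ell\subseteq B_{k\vee\ell}$ (with the convention $B_\infty=\{0\}$). I would verify this on spanning elements only: given $w_pw_q^*$ with $\mu(p)=\mu(q)=k$ and $w_rw_s^*$ with $\mu(r)=\mu(s)=\ell$, covariance gives $w_q^*w_r=w_{q^{-1}(q\vee r)}w_{r^{-1}(q\vee r)}^*$ when $q\vee r<\infty$ and $0$ otherwise, so writing $m=q\vee r$ the product collapses to $w_{pq^{-1}m}w_{sr^{-1}m}^*$. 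Applying $\mu$ and using Definition~\ref{expanded controlled map definition}\eqref{cm-1}, so that $\mu(m)=\mu(q)\vee\mu(r)=k\vee\ell$, shows both indices have image $k\vee\ell$; and the contrapositive of \eqref{cm-1} shows that $k\vee\ell=\infty$ forces $q\vee r=\infty$, i.e.\ the product vanishes. This is the only real computation in the proof.

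With this rule in hand the first assertion is immediate: $C_I$ is closed by definition, it is self-adjoint because $(w_pw_q^*)^*=w_qw_p^*$ has the same value of $\mu$, and it is multiplicatively closed because a product of two generators lies in some $B_{k\vee\ell}$ with $k\vee\ell\in I$ (here I use that $I$ is $\vee$-closed), hence in $C_I$. For the second assertion the inclusion $\lsp\{B_k:k\in I\}\subseteq C_I$ is clear, since each $B_k$ with $k\in I$ is a closed span of generators of $C_I$; the reverse inclusion will follow once I show that the finite sum $\sum_{k\in I}B_k$ is \emph{already} norm-closed. The third assertion is then easy: every spanning element $w_pw_q^*$ of $\clsp\{w_pw_q^*:\mu(p)=\mu(q)\}$ has $\mu(p)=\mu(q)=:k$ with $\{k\}\in\calI$, so lies in $C_{\{k\}}$, while conversely each $C_I$ is contained in $\clsp\{w_pw_q^*:\mu(p)=\mu(q)\}$ by the first assertion and the latter set is closed.

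The only substantial point is therefore the closedness of $\sum_{k\in I}B_k$. I would prove, by induction on $|U|$, that $\sum_{k\in U}B_k$ is closed for every \emph{upper set} $U\subseteq I$ (meaning $k\in U$, $k'\in I$, $k\le k'\Rightarrow k'\in U$). The multiplication rule shows that for such $U$ the sum $\sum_{k\in U}B_k$ is a two-sided ideal of $C_I$: a product $B_mB_k$ (or $B_kB_m$) with $m\in I$, $k\in U$ lands in $B_{m\vee k}$, and $m\vee k\ge k$ together with $\vee$-closedness of $I$ and the upper-set property puts $m\vee k\in U$ whenever the product is nonzero. For the inductive step, pick $k_0$ minimal in a nonempty $U$; then $U\setminus\{k_0\}$ is again an upper set (if $k\le k'$ with $k\in U\setminus\{k_0\}$ then $k'=k_0$ would contradict minimality of $k_0$), so by the inductive hypothesis $\sum_{k\in U\setminus\{k_0\}}B_k$ is a closed ideal of $C_I$, while $B_{k_0}$ is a $C^*$-subalgebra of $C_I$. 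The key external ingredient is the standard fact that if $B$ is a $C^*$-subalgebra and $J$ a closed ideal of a $C^*$-algebra, then $B+J$ is closed, being the preimage of the closed subalgebra $q(B)$ under the quotient map $q$ onto the quotient by $J$. Hence $B_{k_0}+\sum_{k\in U\setminus\{k_0\}}B_k=\sum_{k\in U}B_k$ is closed, completing the induction; taking $U=I$ yields $C_I=\sum_{k\in I}B_k=\lsp\{B_k:k\in I\}$.

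I expect the closedness step to be the main obstacle: a finite sum of $C^*$-subalgebras is in general not closed, and what rescues the argument is that the $B_k$ are not arbitrary but satisfy $B_kB_\ell\subseteq B_{k\vee\ell}$, which turns the upper sets of $I$ into closed ideals and lets the ``$C^*$-subalgebra plus closed ideal is closed'' lemma drive the induction. Everything else is bookkeeping with the definitions of $C_I$, $B_k$ and $\calI$.
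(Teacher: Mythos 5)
Your proof is correct, and it actually does more than the paper's own argument. The computational core is identical to the paper's: like the authors, you use covariance together with Definition~\ref{expanded controlled map definition}\eqref{cm-1} to show that a product of generators $w_pw_q^*$, $w_rw_s^*$ collapses to $w_{pq^{-1}(q\vee r)}w_{sr^{-1}(q\vee r)}^*$ with both indices mapping to $k\vee\ell$ (and vanishes when $k\vee\ell=\infty$), whence $C_I$ is a $*$-subalgebra because $I$ is $\vee$-closed; and both you and the paper get $\lsp\{B_k:k\in I\}\subseteq C_I$ and $w_pw_q^*\in B_{\mu(p)}$ for the reverse direction. The divergence is at the one genuinely delicate point, the closedness of the finite sum $\sum_{k\in I}B_k$: the paper dispatches this with the single sentence ``the finite span of closed subalgebras is closed'', which is not a valid general principle --- a finite sum of $C^*$-subalgebras of a $C^*$-algebra need not be closed (for instance, inside the $c_0$-direct sum of copies of $M_2(\C)$, take $A=\{(a_nP_n):a\in c_0\}$ and $B=\{(b_nQ_n):b\in c_0\}$ where $P_n,Q_n$ are rank-one projections whose angle tends to $0$). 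Your upper-set induction supplies exactly the structure that makes the claim true here: $B_kB_\ell\subseteq B_{k\vee\ell}$ plus $\vee$-closedness of $I$ shows that for an upper set $U$ the sum $\sum_{k\in U}B_k$ absorbs multiplication by every $B_m$ with $m\in I$; since $\lsp\{B_m:m\in I\}$ contains all generators of $C_I$ and is therefore dense, and the sum is closed by the inductive hypothesis, continuity upgrades this to the statement that it is a closed two-sided ideal of $C_I$ (this density-and-continuity step is implicit in your write-up and worth saying explicitly, though it is not a gap), after which the standard ``$C^*$-subalgebra plus closed ideal is closed'' lemma, via the quotient map as you describe, drives the induction. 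Notably, your minimal-element induction over $I$ mirrors the one the authors themselves run in the proof of Proposition~\ref{piT is faithful on core}, so your repair is entirely consonant with the paper's methods; what it buys is a complete proof where the paper relies on an unjustified (and, as stated, false) generality. You also verify the third assertion explicitly, via $\{k\}\in\calI$ since $k\vee k=k$, whereas the paper's proof leaves that clause without comment.
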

  \begin{proof}
  Fix $I\in\calI$. To see that $C_I$ is a $C^*$-subalgebra,  it suffices to show that $\lsp\{w_pw^*_q:\mu(p)=\mu(q)\in I\}$ is a $*$-subalgebra. It's clearly closed under taking adjoints. Let $p,q,r,s\in P$ such that $\mu(p)=\mu(q)\in I$ and $\mu(r)=\mu(s)\in I$. Then
  $$
  w_pw^*_qw_rw^*_s=\begin{cases}w_{pq^{-1}(q\vee r)}w^*_{sr^{-1}(q\vee r)}&\text{if $q\vee r<\infty$}\\0&\text{otherwise.}\end{cases}
  $$
    If $w_pw^*_qw_rw^*_s=0$ we are done. So suppose  that $w_pw^*_qw_rw^*_s\neq 0$. Then $q\vee r<\infty$. Since $\mu$ is a controlled map and $\mu(p)=\mu(q)$, by Definition~\ref{expanded controlled map definition}\eqref{cm-1},
    \[
    \mu(pq^{-1}(q\vee r))=\mu(q\vee r)=\mu(q)\vee \mu(r).
    \]
    Similarly, $\mu(sr^{-1}(q\vee r))=\mu(q)\vee \mu(r)$. Since $I$ is closed under $\vee$ we have  $\mu(q)\vee \mu(r)\in I$, and  hence $w_pw^*_qw_rw^*_s\in \lsp\{w_pw^*_q:\mu(p)=\mu(q)\in I\}$. It follows that  $C_I$ is a $C^*$-subalgebra.

    For each $k\in I$, we have $B_k\subseteq C_I$, and so $\lsp\{B_k:k\in I\}\subseteq C_I$. To show the reverse inclusion observe that for $w_pw^*_q\in C_I$ we have $w_pw^*_q\in B_{\mu(p)}$. Since  the finite span of closed subalgebras is closed,  $\overline\lsp\{w_pw^*_q:\mu(p)=\mu(q)\in I\}\subseteq\lsp\{B_k:k\in I\}$. Thus $C_I=\lsp\{B_k:k\in I\}$.
  \end{proof}

  \begin{prop}\label{piT is faithful on core}
  Let $(G,P)$ and $(K,Q)$ be quasi-lattice ordered groups, and suppose that $\mu:G\to K$ is a controlled map.
    If $\big(\mu^{-1}(e),\mu^{-1}(e)\cap P\big)$ is amenable, then $\pi_T$ is faithful on $\overline\lsp\{w_pw^*_q:\mu(p)=\mu(q)\}$.
  \end{prop}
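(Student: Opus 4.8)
The plan is to reduce the statement to faithfulness of $\pi_T$ on each finite-stage algebra $C_I$ and then exploit a triangular block structure coming from the decomposition $\ell^2(P)=\bigoplus_k H_k$. By Lemma~\ref{core is limit of CI} the algebra $\overline\lsp\{w_pw_q^*:\mu(p)=\mu(q)\}$ is the closure of the directed union $\bigcup_{I\in\calI}C_I$, which is a dense $*$-subalgebra. Since an injective homomorphism between $C^*$-algebras is isometric, it suffices to prove that $\pi_T$ is injective on each $C_I$: then $\pi_T$ is isometric on every $C_I$, hence on the dense subalgebra $\bigcup_I C_I$, and hence on its closure, so $\pi_T$ is faithful on $\overline\lsp\{w_pw_q^*:\mu(p)=\mu(q)\}$.

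First I would record the geometry of the subspaces $H_k$. Since $\mu$ is order-preserving we have $\mu(P)\subseteq Q$, and by completeness of $\Sigma_k$ (Definition~\ref{expanded controlled map definition}\eqref{cm-2}) every $x\in P$ with $\mu(x)=k$ factors as $x=\gamma z$ with $\gamma\in\Sigma_k$ and $z\in\mu^{-1}(e)\cap P$; thus $H_k=\clsp\{\epsilon_x:x\in P,\ \mu(x)=k\}$. In particular the $H_k$ are pairwise orthogonal and $\ell^2(P)=\bigoplus_{k}H_k$. The computation in Lemma~\ref{isometric on H_k}(1), with the $\mu$-values tracked, shows that $\pi_T(b)$ leaves every $H_j$ invariant for $b\in B_k$. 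The key additional observation is a triangularity: for $b\in B_k$ we have $\pi_T(b)|_{H_j}=0$ unless $k\leq j$. Indeed, it is enough to check this on a spanning operator $\pi_T(w_pw_q^*)$ with $\mu(p)=\mu(q)=k$ and on a basis vector $\epsilon_x$ with $\mu(x)=j$: here $T_q^*\epsilon_x\neq 0$ forces $q\leq x$, and then $k=\mu(q)\leq\mu(x)=j$ because $\mu$ is order-preserving. So $\pi_T(w_pw_q^*)|_{H_j}=0$ whenever $k\not\leq j$, and the claim follows by linearity and continuity.

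With this in hand the injectivity of $\pi_T$ on $C_I$ follows by peeling the poset $I$ from the bottom. Any $a\in C_I=\lsp\{B_k:k\in I\}$ is a finite sum $a=\sum_{k\in I}b_k$ with $b_k\in B_k$. Suppose $\pi_T(a)=0$ and list the elements of $I$ as $k_1,\dots,k_n$ along a linear extension of $\leq$, so that $k_i\leq k_j$ implies $i\leq j$. I would prove $b_{k_i}=0$ by induction on $i$. Restricting $\pi_T(a)=\sum_j\pi_T(b_{k_j})$ to $H_{k_i}$ and using triangularity, every term with $k_j\not\leq k_i$ vanishes; the surviving terms have $k_j\leq k_i$, that is $k_j<k_i$ (already zero by the inductive hypothesis, since such $j<i$) or $k_j=k_i$. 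Hence $\pi_T(b_{k_i})|_{H_{k_i}}=\pi_T(a)|_{H_{k_i}}=0$. By Lemma~\ref{faithful on Bk}, the hypothesis that $(\mu^{-1}(e),\mu^{-1}(e)\cap P)$ is amenable makes $\pi_T|_{B_{k_i}}$ isometric on $H_{k_i}$, so $b_{k_i}=0$. Thus all $b_{k_i}$ vanish, $a=0$, and $\pi_T$ is injective on $C_I$.

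The routine inputs — invariance of the $H_j$, the factorisation $x=\gamma z$, and isometry of $\pi_T|_{B_k}$ on $H_k$ — are all in place from the preceding lemmas. The part that carries the argument, and which I would be most careful to state precisely, is the triangularity $\pi_T(B_k)|_{H_j}=0$ for $k\not\leq j$ together with the resulting bottom-up induction over the finite poset $I$: this is exactly what decouples the finitely many subalgebras $B_k$ inside $C_I$ so that the single-$k$ faithfulness of Lemma~\ref{faithful on Bk} can be applied one level at a time.
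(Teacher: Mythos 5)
Your proposal is correct and follows essentially the same route as the paper: reduce via Lemma~\ref{core is limit of CI} to the finite-stage algebras $C_I$, establish the triangularity $\pi_T(B_k)|_{H_j}=0$ for $k\not\leq j$ from order-preservation of $\mu$, and peel off the finite poset $I$ one minimal element at a time using Lemma~\ref{faithful on Bk}. Your use of a linear extension of $\leq$ on $I$ is just a tidier phrasing of the paper's repeated choice of minimal elements, and your identification $H_k=\clsp\{\epsilon_x : x\in P,\ \mu(x)=k\}$ is a harmless (correct) supplement.
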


\begin{proof}
By Lemma~\ref{core is limit of CI}, $\overline\lsp\{w_pw^*_q:\mu(p)=\mu(q)\}=\overline{\cup_{I\in\calI}C_I}$. Thus it suffices to show that $\pi_T$ is isometric on each $C_I$. Fix $I\in \calI$. Suppose that $\pi_T(R)=0$ for some $R\in C_I$. Then there exist $R_k\in B_k$ such that $R=\sum_{k\in I}R_k$ and then $\sum_{k\in I}\pi_T(R_k)=0$.

We claim that if $k\not\leq j$, then  $\pi_T(B_k)|_{H_j}=0$ (it then follows that $\pi_T(R_k)|_{H_j}=0$). To prove the claim, it suffices to show that $\pi_T(w_q^*)\epsilon_{\gamma z}=0$ for all $q\in \mu^{-1}(k)\cap P$ and $\epsilon_{\gamma z}\in H_k$.  We have
\[
\pi_T(w_q^*)\epsilon_{\gamma z}=T^*_q\epsilon_{\gamma z}
=\begin{cases}
\epsilon_{q^{-1}\gamma z} &\text{if $q\leq \gamma z$}\\
0&\text{otherwise.}
\end{cases}
\]
But $q\leq \gamma z$ implies $k=\mu(q)\leq \mu(\gamma z)=\mu(\gamma)=j$. So $k\not\leq j$ implies $\pi_T(w_q^*)\epsilon_{\gamma z}=0$. Hence $\pi_T(B_k)|_{H_j}=0$ if $k\not\leq j$ as claimed.

  Let $l$ be a minimal element of $I$ in the sense that $x\leq l$ implies $x=l$. Then for $k\in I$, we have $k\not\leq l$ unless $k=l$. Now
  $$
  0=\sum_{k\in I}\pi_T(R_k)|_{H_l}=\pi_T(R_l)|_{H_l}.
  $$
Since $\big(\mu^{-1}(e),\mu^{-1}(e)\cap P\big)$ is amenable, $\pi_T|_{B_l}$ is isometric on $H_l$ by Lemma~\ref{faithful on Bk}. Thus $R_l=0$.

Let $l_2$ be a minimal element of $I\backslash\{l\}$. Then we can repeat the above argument to get $R_{l_2}=0$. Since $I$ is finite, we can continue to conclude that $R=0$.
\end{proof}
We can now prove Theorem~\ref{p-controlled map implies amenable}

\begin{proof}[Proof of Theorem~\ref{p-controlled map implies amenable}]
Suppose that $K$ is an amenable  group. To see $(G,P)$ is amenable, we will show that the conditional expectation \[E:C^*(G,P)\to \overline{\lsp}\{w_pw^*_p:p\in P\}\] is faithful.
Let $\Psi^\mu$ be the conditional expectation of Lemma~\ref{conditional expectation construction}. We have
\begin{align*}
  E(\Psi^{\mu}(w_pw^*_q))&=\begin{cases}\Psi(w_pw^*_q)&\text{if $\mu(p)=\mu(q)$}\\0&\text{otherwise}\end{cases}\\
  &=\begin{cases}w_pw^*_p&\text{if $p=q$}\\0&\text{otherwise}\end{cases}\\
  &=\Psi(w_pw^*_q),
\end{align*}
and hence $E=E\circ\Psi^{\mu}$.

 Since $K$ is an amenable group, $\Psi^\mu$ is faithful  by Lemma~\ref{conditional expectation construction}. Let  $P_z\in B(\ell^2(P))$ be the orthogonal projection onto $\lsp\{\epsilon_z\}$. It is straightforward to show that the diagonal map $\Delta:B(\ell^2(P))\to B(\ell^2(P))$ given by
$$\Delta(T)=\sum_{z\in P}P_zTP_z$$
is a  conditional expectation such that $\Delta\circ \pi_T=\pi_T\circ E$ and is faithful.

Now suppose that $R\in C^*(G,P)$ and $E(R^*R)=0$. Then $E(\Psi^{\mu}(R^*R))=0$ and so $\pi_T\circ E(\Psi^{\mu}(R^*R))=0$. This gives $\Delta\circ \pi_T(\Psi^\mu(R^*R))=0$. Since $\Delta$ is faithful, it follows that $\pi_T(\Psi^\mu(R^*R))=0$. Since  $\big(\mu^{-1}(e),\mu^{-1}(e)\cap P\big)$ is amenable,  Lemma~\ref{piT is faithful on core} implies that $\pi_T$ is faithful on $\clsp\{w_pw_q^*:\mu(p)=\mu(q)\}=\im\Psi^\mu$. Thus $\Psi^\mu(R^*R)=0$, and then  $R=0$ since $\Psi^{\mu}$ is faithful.
Hence $E$ is faithful and $(G,P)$ is amenable.
\end{proof}

  \section{Quasi-lattice ordered HNN extensions}

   Let $G$ be a group, let $A$ and $B$ be subgroups of $G$,  and let $\phi:A\to B$ be an isomorphism. The group with presentation
   $${G^*=\langle G,t\mid t^{-1}at=\phi(a),a\in A\rangle}$$ is the \emph{HNN extension of G} with respect to $A,B$ and $\phi$. For every HNN extension $G^*$  the \emph{height map} is the homomorphism $\theta:G^*\to \Z$ such that $\theta(g)=0$ for all $g\in G$ and $\theta(t)=1$.

\begin{example*}
 Let  $c,d\in \Z$. The Baumslag-Solitar group
  $$\BS(c,d)=\langle x,t\mid t^{-1}x^dt=x^c\rangle =\langle x,t\mid tx^c=x^dt\rangle$$
   is an HNN extension of $\Z$ with respect to $A=d\Z$, $B=c\Z$ and  $\phi:A\to B$ given by $\phi(dn)=cn$ for all $n\in \Z$. Then $\Z^*$ satisfies the relation $t^{-1}dt=\phi(d)=c$.   Let $\BS(c,d)^+$ be the subsemigroup of $\BS(c,d)$ generated by $x$ and $t$.
Spielberg showed in \cite[Theorem~2.11]{Spielberg2012} that $(\BS(c,d), \BS(c,d)^+)$ is quasi-lattice ordered for all $c,d> 0$; he also proved in \cite[Lemma 2.12]{Spielberg2012} that $(\BS(c,-d),\BS(c,-d)^+)$ is not quasi-lattice ordered unless $c=1$.

\end{example*}

To work with  an HNN extension we use a normal form for its elements from \cite[Theorem 2.1]{LyndonSchupp1977}.
   We choose $X$ to be a complete set of left coset representatives for $G/A$ such that $xA\neq x'A$ for  $x\neq x'\in X$. Similarly, choose  a complete set $Y$ of left coset representatives for $G/B$. Then a  (right) \emph{normal form relative to $X$ and $Y$} of $g\in G$ is a product \[g=g_0t^{\epsilon_1}g_1t^{\epsilon_2}\ldots g_{n-1}t^{\epsilon_n}g_n\] where:
    \begin{enumerate}
      \item $g_n$ is an arbitrary element of $G$.
      \item If $\epsilon_i=1$, then $g_{i-1}$ is an element of $X$
      \item If $\epsilon_i=-1$, then $g_{i-1}$ is an element of $Y$.
    \end{enumerate}
By \cite[Theorem 2.1]{LyndonSchupp1977}, for every choice of complete  left coset representatives $X$ and $Y$, each  $g\in G^*$ has a unique normal form.

Our goal is to generalise the properties of the Baumslag-Solitar group with $c, d>0$ to construct quasi-lattice ordered HNN extensions of other quasi-lattice ordered groups.

  Let $(G,P)$ be a quasi-lattice ordered group. Let $G^*$ be the HNN extension of $G$ with respect to subgroups $A$ and $B$ with an isomorphism $\phi:A\to B$. Let $P^*$ be the subsemigroup of $G^*$ generated by $P$ and $t$. In general, $(G^*,P^*)$ is not a quasi-lattice ordered group. For example, if $c>1$, then $(\BS(c,-d),\BS(c,-d)^+)$ is not quasi-lattice ordered by \cite[Lemma 2.12]{Spielberg2012}. We need some conditions on our subgroups $A$ and $B$ and on the isomorphism $\phi$ which ensure that $(G^*,P^*)$ is quasi-lattice ordered.

  There are two reasons why $(\BS(c,d),\BS(c,d)^+)$ is easy to work with. The first is that there are   natural choices of coset representatives: $\{0,\ldots,d-1\}$ for $A=d\Z$ and $\{0,\ldots,c-1\}$ for $B=c\Z$. The second is that the subgroup isomorphism $\phi:md\mapsto mc$ takes positive elements to positive elements. In particular, using this choice of representatives, every element $\omega\in\BS(c,d)^+$ has a unique normal form
  $$\omega=x^{m_0}tx^{m_1}t\ldots x^{m_{n-1}}tx^{m_n}$$ where $0\leq m_i<d$ for $i<n$ and $m_n\in \N$.  This choice of coset representatives is associated to the division algorithm on $\N$: for every $n\in \N$ we can uniquely write $n=md+r$ for some $m\in \N$ and $0\leq r\leq d-1$.

In general, for $G^*$ we would like a natural choice of coset representatives for $G/A$ and $G/B$ so that every element of $P^*$ has a unique normal form that is a sequence of elements in $P$ and $t$.

  \begin{thm}\label{(G*,P*) is left ql}
    Let $(G,P)$ be a quasi-lattice ordered group with subgroups $A$ and $B$. Suppose that:
    \begin{enumerate}
      \item There is an isomorphism $\phi:A\to B$ such that $\phi(A\cap P)=B\cap P$;
      \item Every left coset $gA\in G/A$ such that $gA\cap P\neq \emptyset$ has a minimal coset representative $p\in P$: $q\in gA\cap P \Rightarrow p\leq q$;
      \item For every $x,y\in B$, $x\vee y<\infty\Rightarrow x\vee y\in B$.
    \end{enumerate}
    Let $G^*=\langle G,t\mid t^{-1}at=\phi(a),a\in A\rangle$ be the HNN extension of $G$ and let $P^*$ be the subsemigroup of $G^*$ generated by $\{P,t\}$. Then $(G^*,P^*)$ is quasi-lattice ordered.
  \end{thm}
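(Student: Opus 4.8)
The plan is to verify the Crisp--Laca criterion \eqref{Crisp Laca}: I want to show that every $g\in P^*(P^*)^{-1}$ can be written as $g=\mu\nu^{-1}$ with $\mu,\nu\in P^*$ minimal in the required sense. The conceptual heart of the argument is to exploit the hypotheses to give every element of $P^*$ a \emph{canonical normal form} built out of $P$ and $t$, analogous to the Baumslag--Solitar case $\omega=x^{m_0}tx^{m_1}t\cdots tx^{m_n}$. First I would make a judicious choice of coset representatives. Using hypothesis (2), for $G/A$ I would choose the set $X$ so that whenever a coset meets $P$, its representative is the minimal element of $gA\cap P$ lying in $P$; the remaining cosets can be represented arbitrarily. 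Because $\phi(A\cap P)=B\cap P$ by hypothesis (1), the isomorphism $\phi$ carries the ``positive part'' of $A$ onto that of $B$, and I would transport the choice of $X$ through $\phi$ (together with the relation $t^{-1}at=\phi(a)$) to choose $Y$ for $G/B$ compatibly. The point of this coordinated choice is that the Britton/normal-form reduction of any word in $P$ and $t$ should never force a negative letter: every $\omega\in P^*$ should acquire a unique normal form $\omega=p_0tp_1t\cdots tp_n$ with $p_i\in P$ and with $p_0,\dots,p_{n-1}$ the chosen minimal coset representatives in $X\cap P$.

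Next, granting the normal form, I would read off the partial order on $P^*$ and identify least upper bounds. Since $\theta:G^*\to\Z$ is an order-preserving homomorphism (with $\theta(P^*)\subseteq\N$), two elements with a common upper bound must be comparable ``level by level'' in $t$. I would argue that $\mu\le\nu$ in $P^*$ forces their normal forms to agree on an initial segment of $t$-syllables, reducing the computation of $\mu\vee\nu$ to (a) matching the initial $t$-blocks and (b) computing a join in $G$ of the tails. Here hypothesis (3) is what keeps the construction inside $P^*$: when a join is taken after cancelling a $t^{-1}$, the relevant elements land in $B$, and (3) guarantees their join again lies in $B$, hence can be written in the chosen normal form without leaving $P^*$. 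The minimal-representative property from (2) is what makes the syllables $p_i$ for $i<n$ canonically determined, so that comparing two normal forms is unambiguous.

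Finally, with existence and uniqueness of the normal form in hand, verifying \eqref{Crisp Laca} becomes a syllable-by-syllable induction on the $t$-length $n=\theta(\mu)$. For $g\in P^*(P^*)^{-1}$ I would write $g=\mu\nu^{-1}$, reduce using the normal forms, and peel off common leading $t$-syllables; at each stage the inner joins exist by quasi-lattice orderedness of $(G,P)$ and remain positive by (1) and (3), while minimality of the resulting $\mu,\nu$ follows from the minimality of the chosen coset representatives via (2). Uniqueness of the pair $(\mu,\nu)$ then follows from uniqueness of the normal form.

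I expect the main obstacle to be establishing the canonical normal form itself, and in particular proving that the reduction of a positive word never produces an inverse letter $t^{-1}$. The subtlety is that Britton's lemma controls reduction in $G^*$ abstractly, but one must check that the coordinated choices of $X$ and $Y$—linked through $\phi$ and the relation $t^{-1}at=\phi(a)$—are genuinely compatible, so that pushing an element of $P$ past a $t$ always yields another element of $P$ times the correct representative, using $\phi(A\cap P)=B\cap P$ crucially. Once this closure-of-$P^*$-under-normalisation is secured, the order-theoretic part (the join computation and the Crisp--Laca verification) should follow by the kind of bookkeeping already familiar from the Baumslag--Solitar example.
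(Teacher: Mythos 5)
Your outline does coincide with the paper's strategy---obtain a canonical normal form $p_0tp_1t\cdots tp_n$ for elements of $P^*$ from hypotheses (1) and (2), cancel pinches $t(rs^{-1})t^{-1}=\phi^{-1}(rs^{-1})$ in $\alpha\beta^{-1}$ using hypothesis (3), and verify \eqref{Crisp Laca} by an induction that strips $t$-syllables---but the decisive step is missing, and you have mislocated where the difficulty sits. The normal form, which you flag as the main obstacle, is the easy half: since $P^*$ is generated by $P\cup\{t\}$, no letter $t^{-1}$ can ever appear in a word for an element of $P^*$, so the set $Y$ of representatives for $G/B$ is irrelevant and no ``coordinated choice of $Y$ through $\phi$'' is needed. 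One simply takes any $X\supseteq L_A$, splits $q=pa$ with $p\in L_A$ and $a\in A\cap P$ by hypothesis (2), and pushes $a$ rightwards across $t$ as $\phi(a)\in B\cap P$; this is Lemma~\ref{right lower bound gives unique normal form in P}, where the normal-form property is in fact shown to be \emph{equivalent} to hypothesis (2).

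The genuine gap is the minimality half of \eqref{Crisp Laca}. Having produced candidates $\mu,\nu$, you must show $\mu\leq\gamma$ and $\nu\leq\delta$ for \emph{every} factorization $x=\gamma\delta^{-1}$ with $\gamma,\delta\in P^*$---in particular for factorizations with $\theta(\gamma)$ strictly larger than $\theta(\mu)$, which exist in abundance. ``Peeling common leading syllables'' together with the minimality of the coset representatives does not reach these, and minimality does not come from hypothesis (2) at all, contrary to your sketch. What is needed is the sharper consequence of hypothesis (3) isolated in Lemma~\ref{PP-1-closed is vee closed}: if $z\in B\cap PP^{-1}$, then the Crisp--Laca minimal pair for $z$ lies in $B\cap P$ and dominates every factorization of $z$ by elements of $P$. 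With this, the correct induction is on $\theta(\gamma)$ (not on $\theta(\mu)$, as you propose): when $\theta(\gamma)=k+1$ and $x$ has height-zero normal form, uniqueness of normal forms forces the pinch $m_{k+1}n_{k+1}^{-1}\in B$; the lemma yields $b_m,b_n\in B\cap P$ with $b_m\leq m_{k+1}$ and $b_n\leq n_{k+1}$; and then $\gamma=\gamma'\,t\,(b_m^{-1}m_{k+1})$ exhibits $\gamma'\leq\gamma$ with $\theta(\gamma')=k$ and $\gamma'(\delta')^{-1}=x$, closing the induction. The cases where $t$'s or $t^{-1}$'s survive in $x$ reduce to this one by using uniqueness of normal forms to split off the common stem $p_0t\cdots p_{m-n-1}t\leq\gamma$. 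Without the dominance statement of Lemma~\ref{PP-1-closed is vee closed}---which is where (3) actually enters, beyond merely ``keeping joins in $B$''---your induction has no mechanism for decreasing the height of an arbitrary factorization, and the argument stalls exactly at the point your sketch glosses with ``minimality \dots follows from the minimality of the chosen coset representatives via (2)''.
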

    Before we can prove Theorem~\ref{(G*,P*) is left ql}, we need to prove two lemmas. The first shows that elements of $P^*$ are guaranteed to have normal forms made up of elements of $P$ and $t$ if and only if condition (2) of Theorem~\ref{(G*,P*) is left ql} holds. The second is a technical lemma which we will use several times in Theorem~\ref{(G*,P*) is left ql} and in  later proofs.

  \begin{lemma}\label{right lower bound gives unique normal form in P}
    Suppose that $(G,P)$ is a quasi-lattice ordered group with subgroups $A$ and $B$. Suppose that  $\phi:A\to B$  is a group isomorphism such that $\phi(A\cap P)=B\cap P$.
    Let $G^*=\langle G,t\mid t^{-1}at=\phi(a),a\in A\rangle$ be the corresponding HNN extension of $G$ and let $P^*$ be the subsemigroup of $G^*$ generated by $P\cup\{t\}$.  Let $$L_A:=\{p\in P:  q\in pA\cap P\Rightarrow p\leq q\}.$$ The following two statements are equivalent:
    \begin{enumerate}
      \item Every left coset $gA\in G/A$ such that $gA\cap P\neq \emptyset$ has a minimal coset representative $p\in P$;
      \item There exists a complete set $X$ of left coset representatives such that $L_A\subseteq X$ and every $\alpha\in P^*$ has normal form
      \begin{equation}\label{normal form}
      \alpha=p_0tp_1t\ldots p_{n-1}tp_n  \text{\ where $p_i\in L_A$ for all $0\leq i< n$, $p_n\in P$.}
      \end{equation}
    \end{enumerate}
  \end{lemma}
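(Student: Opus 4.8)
The statement is an equivalence between a coset-representative condition (1) and a normal-form condition (2). The plan is to prove both directions. The forward direction $(1)\Rightarrow(2)$ is the substantive one: I must build a complete set $X$ of coset representatives containing $L_A$, and then verify that with this choice every element of $P^*$ has a normal form of the special shape in \eqref{normal form}. The reverse direction $(2)\Rightarrow(1)$ should be comparatively quick.

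Let me sketch the proof.

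=== BEGIN LATEX ===

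\begin{proof}[Proof sketch]
The plan is to prove the two implications separately, with the bulk of the work in $(1)\Rightarrow(2)$.

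\textbf{$(1)\Rightarrow(2)$.}
First I would construct the set $X$. Assuming (1), every coset $gA$ meeting $P$ has a \emph{minimal} element $p\in P$; by the uniqueness clause built into ``minimal coset representative'' and the fact that $P\cap P^{-1}=\{e\}$, this minimal element is the \emph{unique} element of $L_A$ lying in $gA$, so $L_A$ contains exactly one representative from each such coset. I would then extend $L_A$ to a complete set $X$ of left coset representatives for $G/A$ by choosing, arbitrarily, one representative from each coset that does not meet $P$; this gives $L_A\subseteq X$. Similarly fix any complete set $Y$ of representatives for $G/B$, so that normal forms relative to $X$ and $Y$ are defined.

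The heart of the argument is showing that every $\alpha\in P^*$ has a normal form of the shape \eqref{normal form}. I would argue by induction on the length of a word in the generators $P\cup\{t\}$ representing $\alpha$, maintaining the inductive hypothesis that $\alpha$ has a normal form $p_0tp_1t\cdots p_{n-1}tp_n$ with $p_i\in L_A$ for $i<n$ and $p_n\in P$; in particular \emph{all} $t$-exponents are $+1$, so only the hypothesis $\phi(A\cap P)=B\cap P$ (and not $\phi^{-1}$) is needed. The base case $\alpha=p\in P$ is immediate. For the inductive step I multiply a word of the good form on the right by a generator. Right-multiplication by $p\in P$ only changes the tail $p_n\mapsto p_np$, which stays in $P$, so the form is preserved. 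Right-multiplication by $t$ is the delicate case: I write $p_nt$ and must rewrite $p_n$ using the division data. Using (1), I factor $p_n=p_n' a$ where $p_n'\in L_A$ is the minimal representative of $p_nA$ and $a=(p_n')^{-1}p_n\in A$; crucially $a\in A\cap P$ because $p_n'\le p_n$, and then $at=t\phi(a)$ with $\phi(a)\in B\cap P\subseteq P$ by the hypothesis on $\phi$. Hence
\[
p_nt=p_n' a t=p_n' t\,\phi(a),
\]
which pushes a positive factor $\phi(a)$ past $t$ and exhibits $\alpha t=p_0t\cdots p_{n-1}t\,p_n' t\,\phi(a)$ in the required form, with the new interior letter $p_n'\in L_A$ and new tail $\phi(a)\in P$. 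This completes the induction. Uniqueness of the normal form is inherited from \cite[Theorem~2.1]{LyndonSchupp1977}, since $X$ and $Y$ are genuine complete sets of coset representatives.

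\textbf{$(2)\Rightarrow(1)$.}
Conversely, assume (2). Fix a coset $gA$ with $gA\cap P\neq\emptyset$ and pick $q\in gA\cap P$. Since $q\in P\subseteq P^*$, its normal form provided by (2) is just $q=p_0\in L_A$ (a length-zero word in $t$, as $q\in G$ has height $0$). Thus $q\in L_A$, so $p:=q\in L_A$ is a representative of $gA$ in $P$ with the minimality property $r\in pA\cap P\Rightarrow p\le r$ built into the definition of $L_A$. Hence every such coset has a minimal representative in $P$, which is exactly (1).

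\end{proof}

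=== END LATEX ===

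**Where I expect the difficulty.** The main obstacle is the inductive step for right-multiplication by $t$ in the forward direction: one must verify that the ``carry'' element $a=(p_n')^{-1}p_n$ is not merely in $A$ but in $A\cap P$ (so that $\phi(a)\in P$), and that repeatedly applying this rewriting genuinely terminates in the Lyndon–Schupp normal form rather than looping. The positivity $a\in A\cap P$ is precisely what minimality of $p_n'$ over $p_nA\cap P$ delivers via (1), and it is exactly this point that forces the hypothesis $\phi(A\cap P)=B\cap P$ (rather than merely $\phi(A\cap P)\subseteq P$) to be used — indeed only the forward containment $\phi(A\cap P)\subseteq B\cap P\subseteq P$ is needed here, since all $t$-exponents are $+1$.
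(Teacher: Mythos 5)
Your forward direction $(1)\Rightarrow(2)$ is correct and is essentially the paper's argument: the same factorisation $p_n=p_n'a$ with $p_n'\in L_A$ minimal, the observation $a=(p_n')^{-1}p_n\in A\cap P$, and the rewriting $at=t\phi(a)$ with $\phi(a)\in B\cap P$; your induction on word length in the generators is a cosmetic variant of the paper's induction on the height $\theta(\alpha)$, and your construction of $X$ (one minimal representative per coset meeting $P$, uniqueness from $P\cap P^{-1}=\{e\}$, arbitrary extension to the remaining cosets) is fine.

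The reverse direction $(2)\Rightarrow(1)$, however, has a genuine gap. You apply \eqref{normal form} to $q\in P$ itself and conclude $q\in L_A$. But for an element of height $0$ the form \eqref{normal form} has $n=0$, and the condition ``$p_i\in L_A$ for $0\leq i<n$'' is then vacuous: the normal form of $q$ is just $q=p_0$ with the sole requirement $p_0=p_n\in P$, which every $q\in P$ satisfies trivially. So (2) yields no information about height-zero elements, and your conclusion $q\in L_A$ does not follow. Indeed it cannot follow: if every $q\in gA\cap P$ were in $L_A$, then any two such $q,q'$ would satisfy $q\leq q'$ and $q'\leq q$, forcing $gA\cap P$ to be a singleton --- false already for $\BS(c,d)$, where the coset $d\Z$ meets $\N$ in $\{0,d,2d,\dots\}$. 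The fix, which is what the paper does, is to test (2) on the height-one element $qt\in P^*$ rather than on $q$: if $p\in X$ represents $qA$, the unique normal form of $qt$ is $pt\,\phi(p^{-1}q)$, and \eqref{normal form} with $n=1$ forces the first letter $p$ to lie in $L_A$ and the last letter $\phi(p^{-1}q)$ to lie in $P$; since $\phi(A\cap P)=B\cap P$, the latter forces $p^{-1}q\in P$, i.e.\ $p\leq q$ for every $q\in gA\cap P$, which is exactly the minimality in (1). (The paper phrases this as a contradiction: if $gA$ had no minimal representative one could choose $q$ with $p\not\leq q$, and then $\phi(p^{-1}q)\notin P$, so $qt=pt\,\phi(p^{-1}q)$ would violate \eqref{normal form}.) Note this direction genuinely uses the hypothesis on $\phi$ in the reverse containment --- $\phi(a)\in P$ only for $a\in A\cap P$ --- whereas your sketch only flagged the forward containment as relevant.
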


  \begin{proof}
  Assume (1).
   Choose a complete set $X$ of coset representatives which contains $L_A$.  Let $\alpha\in P^*$. If $\theta(\alpha)=0$ then $\alpha\in P$, and $\alpha$ has form \eqref{normal form} trivially.

 We  proceed by induction on $\theta(\alpha)\geq 1$. Suppose that $\theta(\alpha)=1$. We may write $\alpha=q_0tq_1$ for some $q_0,q_1\in P$. Then $q_0A\cap P\neq \emptyset$, and  there exists $p_0\in L_A$ such that $p_0A=q_0A$ and $p_0\leq q_0$. Thus$p_0^{-1}q_0\in P\cap A$. Hence $q_0 =p_0a$ for some $a\in A\cap P$. Thus $\alpha$ has normal form
 $$
 \alpha=p_0atq_1=p_0t\phi(a)q_1.
 $$
Since $\phi(A\cap P)=B\cap P$ we have $\phi(a)\in P$ and so $X$ satisfies (2).

    Suppose that all $\alpha$ with $1\leq \theta(\alpha)\leq k$ have normal form \eqref{normal form}. Consider $\alpha$ with $\theta(\alpha)=k+1$. We write $$\alpha=q_0tq_1t\ldots tq_ktq_{k+1}.$$
    By assumption, we can write the first $2k+2$ terms of $\alpha$ in normal form $$p_0tp_1t\ldots p_{k-1}tr_k$$ where $p_i\in L_A$ for $0\leq i<k$ and $r_k\in P$. There exists $p_k\in L_A$ such that $r_kA=p_kA$ and $p_k\leq r_k$. As above, we can write $r_k=p_ka$ for some $a\in A\cap P$. Then
    \begin{align*}
      \alpha&=p_0tp_1t\ldots tr_ktq_{k+1}\\
      &=p_0tp_1t\ldots tp_katq_{k+1}\\
      &=p_0tp_1t\ldots tp_kt\phi(a)q_{k+1}.
    \end{align*}
We set $p_{k+1}=\phi(a)q_{k+1}$, which is in $P$ because $\phi(a)$ is.    Then $\alpha=p_0tp_1t\ldots tp_ktp_{k+1}$ has form \eqref{normal form}. By  induction, every $\alpha$ has normal form \eqref{normal form}. This implies (2).

For (2) $\Rightarrow$ (1) we argue by contradiction: we will assume (2) holds but (1) doesn't.  Let $X$  be a set of coset representatives satisfying (2), and suppose that there exists some coset $gA$ such that $gA\cap P\neq\emptyset$ which has no minimal coset representative. Let $p\in X$ be the coset representative of $gA$. Since $p$ is not  minimal, there exists $q\in gA\cap P$ with $p\not\leq q$. Thus $p^{-1}q\not\in P$. Consider $qt\in P^*$ in normal form:
\begin{equation}\label{qt in normal form}
qt=pp^{-1}qt=pt\phi(p^{-1}q).
\end{equation}
 Since $\phi(A\cap P)=B\cap P$ we have $\phi(p^{-1}q)\not \in P$. So \eqref{qt in normal form}  is not the  normal form \eqref{normal form}, a contradiction. Thus $(2)\Rightarrow (1)$.
  \end{proof}

\begin{lemma}\label{PP-1-closed is vee closed}
  Let $(G,P)$ be a quasi-lattice ordered group and let $B$ be a subgroup of $G$. Suppose that for every $x,y\in B$, $x\vee y<\infty\Rightarrow x\vee y\in B$. Then for all $x\in B\cap PP^{-1}$, there exist $\mu,\nu\in B\cap P$ such that $x=\mu\nu^{-1}$ and for all $p,q\in P$ with $pq^{-1}=x$ we have $\mu\leq p$ and $\nu\leq q$.
  \end{lemma}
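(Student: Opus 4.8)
The plan is to invoke the Crisp--Laca characterisation \eqref{Crisp Laca} to produce a candidate pair inside $P$, and then to identify the first coordinate of that pair with a least common upper bound, so that the hypothesis that $B$ is closed under $\vee$ can be brought to bear. The decomposition and minimality assertions will come for free from \eqref{Crisp Laca}; the only real content is showing the pair lands in $B$.

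First I would fix $x\in B\cap PP^{-1}$. Since $(G,P)$ is quasi-lattice ordered and $x\in PP^{-1}$, condition \eqref{Crisp Laca} supplies a unique pair $\mu,\nu\in P$ with $x=\mu\nu^{-1}$ such that whenever $p,q\in P$ satisfy $pq^{-1}=x$ we have $\mu\leq p$ and $\nu\leq q$. This already gives the decomposition $x=\mu\nu^{-1}$ together with the minimality property demanded by the lemma, so the entire task reduces to proving that $\mu$ and $\nu$ lie in $B$.

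The key step is to show $\mu=e\vee x$. On the one hand $\mu$ is a common upper bound of $e$ and $x$ in $P$: we have $e\leq\mu$ because $\mu\in P$, and $x\leq\mu$ because $x^{-1}\mu=\nu\in P$. On the other hand, if $z\in P$ is any common upper bound of $e$ and $x$, then $z\in P$ and $x^{-1}z\in P$, while $z(x^{-1}z)^{-1}=x$; the minimality in \eqref{Crisp Laca}, applied with $p=z$ and $q=x^{-1}z$, then forces $\mu\leq z$. Hence $\mu$ is the least common upper bound of $e$ and $x$, i.e.\ $\mu=e\vee x$. (Here $e\vee x<\infty$ because any witness $x=pq^{-1}$ with $p,q\in P$ exhibits $p$ as a common upper bound of $e$ and $x$.)

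Finally, since $e,x\in B$ and $e\vee x<\infty$, the hypothesis that $B$ is closed under $\vee$ yields $\mu=e\vee x\in B$. Then $\nu=x^{-1}\mu$ is a product of elements of $B$, so $\nu\in B$ as well, and thus $\mu,\nu\in B\cap P$, completing the argument. The main (indeed essentially the only) obstacle is the identification $\mu=e\vee x$: that is precisely what converts the $\vee$-closedness of $B$ into membership of the minimal pair in $B$, and everything else follows directly from \eqref{Crisp Laca}.
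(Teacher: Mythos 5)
Your proof is correct and is in substance the same as the paper's: both arguments rest on identifying the minimal pair for $x$ as $\big(x\vee e,\ x^{-1}(x\vee e)\big)$, using the correspondence between decompositions $x=pq^{-1}$ with $p,q\in P$ and common upper bounds of $\{e,x\}$ in $P$, and then invoking $\vee$-closedness of $B$ (together with $x^{-1}, x\vee e\in B$) to place $\mu$ and $\nu$ in $B$. The only difference is direction: the paper constructs $\mu:=x\vee e$ and $\nu:=x^{-1}(x\vee e)$ and verifies minimality directly, whereas you take the minimal pair supplied by \eqref{Crisp Laca} and prove $\mu=e\vee x$ afterwards --- the same computations run in reverse.
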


The lemma says that if $x\in PP^{-1}\cap B$, then the minimal elements of \eqref{Crisp Laca} must also be contained in $B$. In particular,  if $\phi(A\cap P)=B\cap P$, then $\phi(x)\in PP^{-1}$.

  \begin{proof}[Proof of Lemma~\ref{PP-1-closed is vee closed}]
Fix $x\in B\cap PP^{-1}$. Say $x=st^{-1}$ with $s,t\in P$. Then $x^{-1}s\in P$ and $x\leq s$. Also $e\leq s$, and so $x\vee e<\infty$. Since $e,x\in B$ we get $x\vee e\in B$.  Let  $\mu=x\vee e$ and $\nu=x^{-1}(x\vee e)$.  Then $\mu\nu^{-1}=x\vee e(x^{-1}(x\vee e))^{-1}=x$.

Fix  $p,q\in P$ such that  $x=pq^{-1}$.
Then $x^{-1}p=(pq^{-1})^{-1}p=q$, and so $x\leq p$. Therefore $\mu=x\vee e\leq p$. Now $\mu^{-1}p\in P$, and then $\nu^{-1}q=\mu^{-1}p\in P$ gives $\nu\leq q$.
  \end{proof}

We can now prove Theorem~\ref{(G*,P*) is left ql}. Its proof is based on \cite[Theorem~2.11]{Spielberg2012}, and our presentation is  helped by  Emily Irwin's treatment of \cite[Theorem~2.11]{Spielberg2012} in her University of Otago Honours thesis.

  \begin{proof}[Proof of Theorem~\ref{(G*,P*) is left ql}]
 Fix $x\in P^*P^{*-1}$.  We shall prove that there exist $\mu,\nu\in P^*$ with $x=\mu\nu^{-1}$ such that whenever $\gamma\delta^{-1}=x$ we have $\mu\leq \gamma$ and $\nu\leq \delta$ (see \eqref{Crisp Laca}).

 Choose $\alpha,\beta\in P^*$ such that $x=\alpha\beta^{-1}$. Choose a complete set $X$ of left coset representatives  of $G/A$ that contains
 \[
 L_A:=\{p\in P: q\in pA\cap P\Rightarrow p\leq q\}.
 \]
  By Lemma~\ref{right lower bound gives unique normal form in P} we can write $\alpha$ and $\beta$ in unique normal form:
    $$\alpha=p_0tp_1t\ldots tp_mtr \text{ where $p_i\in L_A$ and $r\in P$};$$
    $$\beta=q_0tq_1t\ldots tq_nts \text{ where $q_i\in L_A$ and $s\in P$}.$$
    Now $x=\alpha\beta^{-1}$ is equal to
    $$\alpha\beta^{-1}=p_0tp_1t\ldots tp_mtrs^{-1}t^{-1}q_n^{-1}t^{-1}\ldots t^{-1}q_0^{-1}.$$
First we look for initial cancellations in the middle of $\alpha\beta^{-1}$: if $rs^{-1}\in B$, then we can replace $trs^{-1}t^{-1}$ with $\phi^{-1}(rs^{-1})$.  By assumption (3),  Lemma~\ref{PP-1-closed is vee closed} applies and there exist $b_1,b_2\in P\cap B$ such that $rs^{-1}=b_1b_2^{-1}$. Then
\[
\phi^{-1}(rs^{-1})=\phi^{-1}(b_1b_2^{-1})=\phi^{-1}(b_1)\phi^{-1}(b_2)^{-1}.
\]
  Since $\phi(A\cap P)=B\cap P$ we have $\phi^{-1}(b_1)\phi^{-1}(b_2)^{-1}\in PP^{-1}$. Then
\begin{equation}\label{alpha beta inverse}
x=\alpha\beta^{-1}=p_0tp_1t\ldots tp_m\phi^{-1}(b_1)\phi^{-1}(b_2)^{-1}q_n^{-1}t^{-1}\ldots t^{-1}q_0^{-1}.
\end{equation}
 We can repeat this process until there are no more cancellations available in the middle, and so we assume this is the case for the expression \eqref{alpha beta inverse}.
This gives the following cases:
    \begin{enumerate}
      \item there are no $t$ and no more $t^{-1}$,
      \item there are no more $t^{-1}$,
      \item there are no more $t$,
      \item  there are $t$ and $t^{-1}$, and then the term with $t$ to the left and $t^{-1}$ to its right is not in $B$.
    \end{enumerate}
   In each case, we will write down  our candidates for $\mu$ and $\nu$ and prove that they are the required minimums.

(1) Suppose that after initial cancellations, there are  no more $t$ and no more $t^{-1}$.  Then $\alpha\beta^{-1}=p_0q_0^{-1}$ is already in normal form. By \eqref{Crisp Laca}  there exist $\sigma,\tau\in P$ such that $p_0q_0^{-1}=\sigma\tau^{-1}$ and for all $c,d\in P$ such that $cd^{-1}=\sigma\tau^{-1}$ we have $\sigma\leq c$ and $\tau\leq d$. So we write $x=\sigma\tau^{-1}$ and choose as our candidates $\mu=\sigma$ and $\nu=\tau$.

Let $\gamma,\delta\in P^*$ such that $x=\gamma\delta^{-1}$.
Let $\theta$ be the height map. Then  $\theta(x)=0$ and hence $\theta(\gamma)=\theta(\delta)$. We will prove that $\mu\leq \gamma$ and $\nu\leq \delta$ by induction on $\theta(\gamma)$.

For $\theta(\gamma)=0$ we have $\gamma,\delta\in P$ , and then  $\mu=\sigma\leq\gamma$ and $\nu=\tau\leq \delta$.
Let $k\geq 0$ and suppose that for all $\gamma,\delta\in P^*$ such that $\theta(\gamma)=\theta(\delta)=k$ and $x=\gamma\delta^{-1}$ we have $\mu\leq \gamma$ and $\nu\leq \delta$.
    Now consider $\gamma,\delta\in P^*$ such that $x=\gamma\delta^{-1}$ and $\theta(\gamma)=\theta(\delta)=k+1$. We write
    $\gamma=m_0t\ldots m_ktm_{k+1}$ and $\delta=n_0t\ldots n_ktn_{k+1}$ in normal form where $m_i,n_i\in L_A$ for $0\leq i\leq k$ and $m_{k+1}, n_{k+1}\in P$.
Next we reduce $x=\gamma\delta^{-1}$ towards normal form. We have
$$x=\gamma\delta^{-1}=m_0t\ldots m_ktm_{k+1}n_{k+1}^{-1}t^{-1}n_k\ldots tn_0^{-1}.$$
Since $x$ has a unique normal form with no $t$ or $t^{-1}$, there must be some cancellation. Since the $m_i, n_i\in L_A$ for $0\leq i\leq k$, the cancellation must occur across $m_{k+1}n_{k+1}^{-1}$. So $m_{k+1}n_{k+1}^{-1}\in B$ and $tm_{k+1}n_{k+1}^{-1}t^{-1}=\phi^{-1}(m_{k+1}n_{k+1}^{-1})$, and
    \begin{align*}
      x&=\gamma\delta^{-1}=m_0t\ldots tm_k(tm_{k+1}n_{k+1}^{-1}t^{-1})n_kt^{-1}\ldots tn_0^{-1}\\
      &=m_0t\ldots tm_k\phi^{-1}(m_{k+1}n_{k+1}^{-1})n_kt^{-1}\ldots tn_0^{-1}
    \end{align*}
 By assumption (3), Lemma~\ref{PP-1-closed is vee closed} applies and there exists $b_m,b_n\in B\cap P$ such that  $m_{k+1}n_{k+1}^{-1}=b_mb_n^{-1}$  and  $b_m\leq m_{k+1}$ and $b_n\leq n_{k+1}$. Then
   \[
  x=m_0t\ldots tm_k\phi^{-1}(b_m)\phi^{-1}(b_n^{-1})n_kt^{-1}\ldots tn_0^{-1}.
  \]
  Since $\phi(A\cap P)=B\cap P$ we have $m_{k+1}\phi^{-1}(b_m),n_{k+1}\phi^{-1}(b_n)\in P$.
    But now we have $\gamma'=m_0t\ldots tm_k\phi^{-1}(b_m)$ and $\delta'=n_0t\ldots tn_k\phi^{-1}(b_n)$ such that $\gamma'(\delta')^{-1}=x$ and $\theta(\gamma)=\theta(\delta')=k$. By our induction hypothesis we have $\mu\leq \gamma'$ and $\nu\leq \delta'$.

To show that $\mu\leq \gamma$ we compute:
    \begin{align*}
      \gamma&=m_0t\ldots tm_ktm_{k+1}\\
      &=m_0t\ldots tm_ktb_mb_m^{-1}m_{k+1}\\
      &=m_0t\ldots tm_k\phi^{-1}(b_m)tb_m^{-1}m_{k+1}\quad\text{(replacing $tb_m$ with $\phi^{-1}(b_m)t$)}\\
      &=\gamma'tb_m^{-1}m_{k+1}.
    \end{align*}
Since  $b_m^{-1}m_{k+1}\in P$  we have $tb_m^{-1}m_{k+1}\in P^*$. Now $(\gamma')^{-1}\gamma\in P^*$ and $\gamma'\leq \gamma$. Since $\mu\leq \gamma'$ this gives $\mu\leq \gamma$.
 Similarly, $\delta=\delta'tb_n^{-1}n_{k+1}$ and so $\nu\leq \delta$.
    By induction, for all $\gamma,\delta\in P^*$ such that $x=\gamma\delta^{-1}$ we have $\mu\leq\gamma$ and $\nu\leq \delta$.

    (2) Suppose that after the initial cancellations there are no more $t^{-1}$ left. Then we have $x$ in normal form:
    $$x=\alpha\beta^{-1}=p_0tp_1t\ldots tp_{m-n-1}trs^{-1}$$
    where $m\geq n$.
    By \eqref{Crisp Laca}  there exist $\sigma,\tau\in P$ such that $rs^{-1}=\sigma\tau^{-1}$ and $\sigma\leq r$ and $\tau\leq s$. So
    \begin{equation}\label{eq-x}
    x=p_0tp_1t\ldots tp_{m-n-1}t\sigma\tau^{-1}.
    \end{equation}
     Our candidates are
    $$\mu=p_0tp_1t\ldots tp_{m-n-1}t\sigma\quad\text{and}\quad \nu=\tau.$$
Fix $\gamma,\delta\in P^*$ such that $x=\gamma\delta^{-1}$.  Say $\gamma=m_0tm_1\ldots tm_i$ and $\delta= n_0t n_1\ldots n_j$ in normal form.
  Then
 $$\gamma\delta^{-1}=m_0tm_1\ldots tm_in_j^{-1}t^{-1}\ldots n_1t^{-1}n_0.$$
 Since $\theta(\gamma\delta^{-1}) = \theta(\mu \nu^{-1})=m-n$ we get
 \[i=\theta(\gamma)=\theta(\delta)+m-n\]
 and hence $i\geq m-n$. It follows from the uniqueness of normal form  that there exists $\gamma'\in P^*$ such that
 $$\gamma\delta^{-1}=p_0tp_1t\ldots tp_{m-n-1}t\gamma'\delta^{-1}.$$
 Thus $p_0tp_1t\ldots tp_{m-n-1}t\leq\gamma$. From \eqref{eq-x} we have $(p_0tp_1t\ldots tp_{m-n-1}t)^{-1}x=\sigma\tau^{-1}$. Since $ (p_0tp_1t\ldots tp_{m-n-1}t)^{-1}\gamma,\delta\in P^*$ and $(p_0tp_1t\ldots tp_{m-n-1}t)^{-1}\gamma\delta=\sigma\tau^{-1}$,  we can apply (1) above with $\mu'=\sigma$ and $\nu'=\tau$ to see that $\sigma\leq(p_0tp_1t\ldots tp_{m-n-1}t)^{-1}\gamma$ and $\tau\leq \delta$. Hence $\mu= p_0tp_1t\ldots tp_{m-n-1}t\sigma \leq \gamma$ and $\nu\leq \delta$ as required.

    (3) Suppose that after the cancellations, there are no more $t$ left. Then  $$x=rs^{-1}t^{-1}q_{n-m-1}^{-1}t^{-1}\ldots t^{-1}q_0^{-1}$$   for some $r,s\in P$.   Consider $$x^{-1}=q_0tq_1t\ldots tq_{n-m-1}tsr^{-1}.$$
 By \eqref{Crisp Laca}  there exist $\sigma,\tau\in P$ such that $rs^{-1}=\sigma\tau^{-1}$ and $\sigma\leq r$ and $\tau\leq s$.
Let $$\mu=\sigma \quad\text{and}\quad\nu=q_0tq_1t\ldots tq_{n-m-1}t\tau.$$ By (2) they have the property that $x^{-1}=\nu\mu^{-1}$ and for all $\gamma,\delta\in P^*$ such that $x^{-1}=\delta\gamma^{-1}$ we have $\mu\leq\gamma$ and $\nu\leq\delta$. Taking inverses,   $x=\mu\nu^{-1}$ and for all $\gamma,\delta\in P^*$ such that $x=\gamma\delta^{-1}$ we have $\mu\leq\gamma$ and $\nu\leq\delta$.

    (4) Suppose that  after the initial cancellations there are  both $t$ and $t^{-1}$ left. Then the term with $t$ to the left and $t^{-1}$ to its right is not in $B$. There exist $k\leq m$ and $l\leq n$  such that
    $$x=p_0tp_1t\ldots tp_ktrs^{-1}t^{-1}q_l^{-1}t^{-1}\ldots t^{-1}q_0^{-1}.$$
     By \eqref{Crisp Laca}  there exist $\sigma,\tau\in P$ such that $rs^{-1}=\sigma\tau^{-1}$ and $\sigma\leq r$ and $\tau\leq s$.
   Our candidate for $\mu,\nu$ are $$\mu=p_0tp_1t\ldots tp_kt\sigma\quad\text{and}\quad \nu=q_0tq_1t\ldots tq_lt\tau.$$
Fix  $\gamma,\delta\in P^*$ such that $x=\gamma\delta^{-1}$.  By the argument used in (2), there exists $\gamma'\in P^*$ such that $\gamma\delta^{-1}=p_0tp_1t\ldots tp_{j}t\gamma'\delta^{-1}$. Hence $\gamma'=(p_0tp_1t\ldots tp_{j}t)^{-1}\gamma\in P^*$ and   $p_0tp_1t\ldots tp_{j}t\leq \gamma$.

Consider $\gamma'\delta^{-1}=\sigma\tau^{-1}t^{-1}q_k^{-1}t^{-1}\ldots t^{-1}q_0^{-1}$. Here $\gamma',\delta\in P^*$ and there are no $t$ in $\gamma'\delta^{-1}$ after cancellation.  Applying  (3)
with $\mu'=\sigma$ and $\nu'=\nu$  to get $\mu'=\sigma \leq \gamma'$ and $\nu'=\nu \leq \delta$.
Then   $\mu=p_0tp_1t\ldots tp_{j}t\sigma\leq p_0tp_1t\ldots tp_{j}t\gamma' =\gamma$.
  \end{proof}

Theorem~\ref{(G*,P*) is left ql} gives new examples of quasi-lattice ordered HNN extensions. 

\begin{example}
We can show that the Baumslag-Solitar group $(\BS(c,d),\BS(c,d)^+)$ with $c,d>0$ is quasi-lattice ordered using Theorem~\ref{(G*,P*) is left ql}. Since $(\Z,\N)$ is totally ordered it is quasi-lattice ordered. Let  $A=\{dm:m\in \Z\}$ and $B=\{cm:m\in \Z\}$. Every element $n\in \N$ has a unique decomposition $n=r+md$ where $m\in \N$ and $r\in \{0,1,\ldots, d-1\}$. The remainder $r$ is a choice of coset representative $n+A=r+A$. For all $n'\in n+A\cap \N$ we have $n'=r+md+kd$ where $k\in \Z$ and $k\leq m$. Thus $r\leq n'$.  Hence every coset of $\Z/A$ has nontrivial intersection with $\N$, and has a minimal coset representative in $\N$. Since $B$ is totally ordered it is  closed under taking least upper bounds. Define $\phi:A\to B$ by $\phi(dm)=cm$.  Then $\phi(A\cap\N)=B\cap \N$.  So Theorem~\ref{(G*,P*) is left ql} applies and gives that $(\Z^*,\N^*)$ is quasi-lattice ordered.
\end{example}

\begin{example}\label{example Z^2} We can generalise the previous example  to $(\Z^2,\N^2)$, which is quasi-lattice ordered by \cite[Example~2.3(2)]{Nica1992}. Fix $a,b,c,d\in \N$. Then $A=\{(am,bn):m,n\in \Z\}$ and $B=\{(cm,dn):m,n\in \Z\}$ are subgroups of $\N^2$. Let $\phi:A\to B$ be defined by $\phi((am,bn))=(cm,dn)$. This $\phi$ satisfies $\phi(A\cap \N^2)=B\cap \N^2$.
      For all $(m,n)\in \N^2$, the division algorithm on $\N$ gives a unique decomposition
      \[
      (m,n)=(r_1,r_2)+(ja_1,ka_2) \text{\ for $j,k\in \N$ and $r_1\in \{0,\ldots a_1-1\}$, $r_2\in \{0,\ldots,a_2-1\}$.}
      \] Thus $(r_1,r_2)$ is a minimal left coset representative of $(m,n)+A$. For all $(m,n),(p,q)\in B$, we have
      \[
      (m,n)\vee(p,q)=(\max\{m,p,0\},\max\{n,q,0\})
      \]
       and hence $(m,n)\vee(p,q)\in B$. So $B$ is closed under $\vee$.
      By Theorem~\ref{(G*,P*) is left ql}, $(\Z^{2*},\N^{2*})$ is a quasi-lattice ordered group with presentation $$\Z^{2*}=\langle\Z^2\cup\{t\}\mid (am,bn)t=t(cm,dn)\rangle.$$ It is straightforward to extend this construction to $(\Z^n,\N^n)$.
\end{example}

\begin{example}\label{example free group}
Consider the free group $\F_2$ on $2$ generators $\{a,b\}$ and let $\F_2^+$ be the subsemigroup generated by $e$, $a$ and $b$.  The pair $(\F_2, \F_2^+)$ is quasi-lattice ordered by \cite[Example 2.3(4)]{Nica1992}.  Let $A=\{ a^n:n\in\Z\}$, $B=\{ b^n:n\in\Z\}$ and $\phi:A\to B$ defined by $\phi(a^n)=b^n$.  Every  $x\in F_2^+$  can be written as a product of  $y\in \F_2^+$ which does not end in $a$ followed by $a^n$ for some $n\geq 0$.  Then $y\in xA$.  Every  $z\in yA\cap \F_2^+$ begins with the word $y$ which is in $\F_2^+$. It follows that $y\leq z$. Thus $A$ has minimal left coset representatives in $\F_2^+$. Since $B$ is totally ordered, it is trivially closed under $\vee$. It follows from Theorem~\ref{(G*,P*) is left ql} that $(\F_2^*,\F_2^{+*})$ is a quasi-lattice ordered group with presentation $$\F_2^*=\langle \{a,b,t\}\mid at=tb\rangle.$$
\end{example}

\begin{example}\label{example free group 2} Building on $(\F_2, \F_2^+)$ again, fix $s,u\in \N$, and let $A=\{ a^{ms}:m\in \Z\}$, $B=\{b^{mu}:m\in\Z\}$ and $\phi:A\to B$ be $\phi( a^{ms}) = b^{mu}$. Then $B$ is totally ordered and hence is closed under $\vee$. To see that $A$ has minimal coset representatives, we  observe that every $x\in \F_2^+$ is a product of a  $y\in \F_2^+$ that does not end in $a$ followed by $a^n$ for some $n\in \N$. We   write $n=r+js$ for some $j\in \N$ and $r\in \{0,1,\ldots,s-1\}$. We choose $ya^r$ as our  coset representative. Then for all $z\in ya^rA\cap \F_2^+$  we have $ya^r\leq z$. It follows from Theorem~\ref{(G*,P*) is left ql} that $(\F_2^*,\F_2^{+*})$ is a quasi-lattice ordered group with presentation $$\F_2^*=\langle \{a,b,t\}\mid a^st=tb^u\rangle.$$  Taking $u=s=1$ gives Example~\ref{example free group}.

Replacing $B$ by $B'=\{a^{mu}:m\in\Z\}$ gives a quasi-lattice ordered group $(\F_2^*,\F_2^{+*})$ with presentation $$\F_2^*=\langle \{a,b,t\}\mid a^st=ta^u\rangle.$$
\end{example}

In the next two examples  we show that it is easy to find subgroups which do not have minimal left coset representatives.

\begin{example}
Consider the group   $$G=\Z(\sqrt{2})=\{m+n\sqrt{2}:m,n\in \Z\}$$ with subsemigroup $\Z(\sqrt{2})^+=\Z(\sqrt{2})\cap [0,\infty)$. Let $A=\Z(2\sqrt{2})$. We claim there are no minimal coset representatives for $A$. Suppose, aiming for a contradiction, that there exists some coset representative $p\in \Z(\sqrt{2})^+$ such that
\[
q\in [p+\Z(2\sqrt{2})]\cap \Z(\sqrt{2})^+\Rightarrow p\leq q.
\]
Recall that $\Z(2\sqrt{2})$ is dense in $\R$.\footnote{To see denseness observe that $0<(-2+2\sqrt{2})<1$ and $(-2+2\sqrt{2})^n\in \Z(\sqrt{2})$ for all $n\in \N$. Thus for every open interval $(u,v)$ there exists $n$ such that $(-2+2\sqrt{2})^n<v-u$. Hence there exists $k\in \N$ such that $k(-2+2\sqrt{2})^n\in (u,v)$.} Thus there exists some $a\in \Z(2\sqrt{2})\cap (0,p)$ hence $a\leq p$. Thus $p-a\in [p+\Z(2\sqrt{2})]\cap \Z(\sqrt{2})^+$. But $p-a\leq p$, giving a contradiction.
\end{example}
\begin{example} Consider $(\Z^2,\N^2)$, and let $A$ be the subgroup  generated   by $\{(1,2),(2,1)\}$. Consider the coset \[(1,0)+A=(0,1)+A.\]
Since  $(1,0)$ and $(0,1)$ have no nonzero lower bound,  there can be  no choice of minimal coset representative.
\end{example}

 \section{Amenability of $(G^*, P^*)$}

In this section we  prove the following theorem.

\begin{thm}\label{HNN extensions are amenable as expected}
  Let $(G,P)$ be a quasi-lattice ordered group with subgroups $A$ and $B$. Suppose that  $\phi:A\to B$ is an isomorphism which satisfies the hypotheses of Theorem~\ref{(G*,P*) is left ql}. Let $(G^*,P^*)$ be the corresponding HNN extension. If $(G,P)$ is amenable, then $(G^*,P^*)$ is amenable.
\end{thm}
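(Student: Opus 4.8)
The plan is to apply Theorem~\ref{p-controlled map implies amenable} using the height map $\theta\colon G^*\to\Z$ as a controlled map into the amenable group $\Z$, with $(\Z,\N)$ as the target quasi-lattice order. Since $P^*$ is generated by $P\cup\{t\}$ and $\theta$ vanishes on $P$ and sends $t$ to $1$, we have $\theta(P^*)\subseteq\N$, so $\theta$ is order-preserving. The substance of the argument is the verification of the three conditions in Definition~\ref{expanded controlled map definition}. Using the normal form of Lemma~\ref{right lower bound gives unique normal form in P}, I would describe, for each $k\in\N$, the minimal set explicitly as
\[
\Sigma_k=\{p_0tp_1t\cdots tp_{k-1}t:p_i\in L_A\},
\]
the height-$k$ positive words whose final $P$-factor is trivial. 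Condition~\eqref{cm-2} is then immediate: any height-$k$ element $p_0tp_1t\cdots tp_{k-1}tp_k$ has the truncation $p_0tp_1t\cdots tp_{k-1}t\in\Sigma_k$ below it, since the quotient is $p_k\in P\subseteq P^*$. Condition~\eqref{cm-3} follows from uniqueness of normal forms: two distinct members of $\Sigma_k$ first disagree at some coset representative $p_i\neq q_i$ in $L_A\subseteq X$, and no element of $P^*$ can then have both as initial segments, so they admit no common upper bound.

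The main obstacle is condition~\eqref{cm-1}. The instance I actually need---and the one invoked in the proof of Lemma~\ref{core is limit of CI}---is that $\theta(x\vee y)=\max\{\theta(x),\theta(y)\}$ for $x,y\in P^*$ with $x\vee y<\infty$. I would establish this by the normal-form analysis of the proof of Theorem~\ref{(G*,P*) is left ql} applied to $w=x^{-1}y$. When $x,y$ have a common upper bound, reducing $w$ toward normal form forces every interior pattern $t^{-1}(\,\cdot\,)t$ to collapse, for otherwise the two positive words would diverge at a coset and have no common upper bound; consequently the minimal decomposition $w=\mu\nu^{-1}$ never lands in the case of that proof where a $t\cdots t^{-1}$ block survives, and therefore satisfies $\min\{\theta(\mu),\theta(\nu)\}=0$. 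Since $x\vee y=x\mu$, this yields
\[
\theta(x\vee y)=\theta(x)+\theta(\mu)=\max\{\theta(x),\theta(y)\}.
\]
This join computation, resting on the uniqueness of normal forms and the compatibility of the chosen coset representatives, is the technical heart of the proof.

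It remains to identify the kernel and use the hypothesis. Because $\theta(t)=1$ and $\theta|_P=0$, a positive word has height $0$ exactly when it involves no $t$, so $\theta^{-1}(e)\cap P^*=P$. The pair $\big(\theta^{-1}(e),P\big)$ is quasi-lattice ordered by the lemma following Theorem~\ref{p-controlled map implies amenable}. Crucially, the order $x\le y\Leftrightarrow x^{-1}y\in P$ and the joins of elements of $P$ depend only on $P$ as a subsemigroup, not on the ambient group, so $\big(\theta^{-1}(e),P\big)$ and $(G,P)$ admit the same covariant representations of $P$ and hence have the same universal and reduced algebras and the same map $\pi_T$. Thus the assumed amenability of $(G,P)$ is precisely the amenability of $\big(\theta^{-1}(e),\theta^{-1}(e)\cap P^*\big)$. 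With $\Z$ amenable, this controlled map and amenable kernel let Theorem~\ref{p-controlled map implies amenable} conclude that $(G^*,P^*)$ is amenable.
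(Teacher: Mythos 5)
Your proposal is correct and follows the paper's overall strategy exactly: the height map $\theta$ as a controlled map into $(\Z,\N)$, the stems $p_0tp_1t\cdots p_{k-1}t$ as $\Sigma_k$ (with completeness by truncation and condition~\eqref{cm-3} by uniqueness of normal forms), the identification $\theta^{-1}(0)\cap P^*=P$, the observation that amenability depends only on the semigroup $P$ with its intrinsic join structure (this is the paper's Proposition~\ref{semigroup isomorphism preserves universal and amenability}, which you invoke via the identity map rather than a general semigroup isomorphism --- same content), and finally Theorem~\ref{p-controlled map implies amenable}. The one place you genuinely depart from the paper is the verification of Definition~\ref{expanded controlled map definition}\eqref{cm-1}. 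The paper splits this into two lemmas: Lemma~\ref{p,q have cub in P* iff they have cub in P}, proved by induction on the height of a common upper bound by collapsing the pattern $t(p_kq_k^{-1})t^{-1}$ with $p_kq_k^{-1}\in B$ (using Lemma~\ref{PP-1-closed is vee closed}), and then Lemma~\ref{stem preserves least upper bounds}, which aligns stems and yields $\theta(x\vee y)=\max\{\theta(x),\theta(y)\}$. You instead reduce $w=x^{-1}y$ directly, collapsing the patterns $t^{-1}(p_i^{-1}q_i)t$ with $p_i^{-1}q_i\in A$ --- note these are $A$-type collapses, the mirror image of the $B$-type collapses in the proof of Theorem~\ref{(G*,P*) is left ql} --- and then conclude $\min\{\theta(\mu),\theta(\nu)\}=0$ by arguing the minimal pair can never arise from case (4) of that proof. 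This works and is arguably more direct, but two steps you leave implicit would need spelling out: first, that for $\gamma\in P^*$ and $\theta(x)\geq 1$ the normal form of $x\gamma$ begins with the stem letters of $x$, which is what makes the collapse ``forced'' when a common upper bound exists; and second, that once $w$ is fully collapsed its normal form (say with $\theta(x)\leq\theta(y)$) contains no $t^{-1}$, so by uniqueness of normal forms every presentation $w=\gamma\delta^{-1}$ with $\gamma,\delta\in P^*$ reduces into cases (1)--(3), whence the unique minimal pair satisfies $\min\{\theta(\mu),\theta(\nu)\}=0$ and $\theta(x\vee y)=\theta(x)+\theta(\mu)=\max\{\theta(x),\theta(y)\}$. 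The paper's two-lemma decomposition packages exactly these facts in reusable form and in addition gives the finer conclusion $x\vee y=\stem(x)(p\vee q)$ when the heights agree; your route avoids a separate induction but re-enters the case analysis of Theorem~\ref{(G*,P*) is left ql}. Your remark that only the $P^*$-instances of condition~\eqref{cm-1} are actually needed matches the paper, whose proof likewise verifies the condition only on $P^*$ via the stem lemma.
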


To prove the theorem we will show that the height map $\theta: G^*\to\Z$ is a controlled map,  that $(\theta^{-1}(e),  \theta^{-1}(e)\cap P^*)$ is amenable, and then apply Theorem~\ref{p-controlled map implies amenable}.  To prove that $(\theta^{-1}(e),  \theta^{-1}(e)\cap P^*)$ is amenable, we start by investigating  order-preserving isomorphisms between the semigroups of quasi-lattice ordered groups.

\begin{lemma}\label{isomorphism preserves least upper bounds}
    Let $(G,P)$ and $(K,Q)$ be quasi-lattice ordered groups. Suppose that there is a semigroup isomorphism $\phi:P\to Q$. Then $\phi$ is  order-preserving. In particular, for $x,y\in P$,  $x\vee y<\infty$ if and only if $\phi(x)\vee\phi(y)<\infty$. If $x\vee y<\infty$ then $\phi(x\vee y)=\phi(x)\vee\phi(y)$.
  \end{lemma}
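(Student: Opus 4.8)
The plan is to first show that $\phi$ is order-preserving by a direct computation, and then to exploit the fact that $\phi^{-1}$ is an order-preserving isomorphism as well, so that $\phi$ implements an order isomorphism of $(P,\le)$ onto $(Q,\le)$. For the order-preserving claim, suppose $x,y\in P$ with $x\le y$. By definition $z:=x^{-1}y\in P$, so $y=xz$ with both factors in $P$; applying the semigroup homomorphism $\phi$ gives $\phi(y)=\phi(x)\phi(z)$, whence $\phi(x)^{-1}\phi(y)=\phi(z)\in Q$ and $\phi(x)\le\phi(y)$. Since $\phi$ is an isomorphism, the same argument applied to $\phi^{-1}:Q\to P$ shows $\phi^{-1}$ is order-preserving, and therefore $x\le y\iff\phi(x)\le\phi(y)$ for all $x,y\in P$.

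Next I would deduce the equivalence $x\vee y<\infty\iff\phi(x)\vee\phi(y)<\infty$. If $x,y$ have a common upper bound $w\in P$, then $\phi(w)$ is a common upper bound for $\phi(x),\phi(y)$ in $Q$ because $\phi$ is order-preserving; hence a join exists on the $Q$ side. The reverse implication follows symmetrically from the fact that $\phi^{-1}$ is order-preserving.

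Finally, assuming $x\vee y<\infty$, I would prove the equality $\phi(x\vee y)=\phi(x)\vee\phi(y)$ by a sandwich argument. Writing $m=x\vee y$, from $x\le m$ and $y\le m$ we get $\phi(x)\le\phi(m)$ and $\phi(y)\le\phi(m)$, so $\phi(m)$ is an upper bound and hence $\phi(x)\vee\phi(y)\le\phi(m)$. Conversely, setting $n=\phi(x)\vee\phi(y)$ and pulling back via the order-preserving map $\phi^{-1}$, the element $\phi^{-1}(n)$ is a common upper bound for $x$ and $y$, so $m\le\phi^{-1}(n)$ and therefore $\phi(m)\le n$. Antisymmetry of the partial order on $K$ (which holds because $Q\cap Q^{-1}=\{e\}$) then forces $\phi(m)=n$, as required.

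There is no serious obstacle here; the only points that require care are remembering that $\phi^{-1}$ is automatically order-preserving, so that the correspondence between upper bounds runs in both directions, and invoking antisymmetry at the last step to upgrade the two inequalities $\phi(x)\vee\phi(y)\le\phi(m)$ and $\phi(m)\le\phi(x)\vee\phi(y)$ to an equality.
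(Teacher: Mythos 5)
Your proposal is correct and follows essentially the same route as the paper's proof: order-preservation of $\phi$ and $\phi^{-1}$ from the semigroup homomorphism property, transfer of common upper bounds in both directions, and the two inequalities $\phi(x)\vee\phi(y)\leq\phi(x\vee y)$ and $\phi(x\vee y)\leq\phi(x)\vee\phi(y)$ combined to give equality. Your explicit appeal to antisymmetry at the final step is a small clarification of a point the paper leaves implicit, but the argument is the same.
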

  \begin{proof}
To see that $\phi$ is order-preserving, let $x,y\in P$ and $x\leq y$. Then $y\in xP$, and $\phi(y)\in \phi(x)\phi(P)=\phi(x)Q$. Thus $\phi(x)\leq\phi(y)$, and $\phi$ is order-preserving.

    To show that $\phi$ preserves the least upper bound structure, first suppose that $x,y\in P$ such that  $x\vee y<\infty$.   Since $\phi$ is order-preserving it follows that $\phi(x),\phi(y)\leq \phi(x\vee y)$. Thus $\phi(x),\phi(y)$ have a common upper bound in $Q$. Hence $\phi(x)\vee\phi(y)$ exists and $\phi(x)\vee\phi(y)\leq \phi(x\vee y)$. Second,  suppose that $\phi(x)\vee\phi(y)<\infty$ for some $x, y\in P$.  Since $\phi^{-1}:Q\to P$ is  a semigroup isomorphism it is order-preserving. Thus $\phi^{-1}(\phi(x)\vee\phi(y))$ is an upper bound for $x=\phi^{-1}(\phi(x))$ and $y=\phi^{-1}(\phi(y))$. Hence $x\vee y$ exists and  $x\vee y\leq \phi^{-1}(\phi(x)\vee\phi(y))$.
     Thus  $$\phi(x\vee y)\leq\phi\circ\phi^{-1}(\phi(x)\vee\phi(y))=\phi(x)\vee\phi(y).$$ Hence $x\vee y<\infty$ if and only if $\phi(x)\vee\phi(y)<\infty$,  and $\phi(x\vee y)=\phi(x)\vee\phi(y)$.
  \end{proof}

  \begin{prop}\label{semigroup isomorphism preserves universal and amenability}
    Let $(G,P)$ and $(K,Q)$ be quasi-lattice ordered groups. Let $\{v_p:p\in P\}$ and $\{w_q:q\in Q\}$ be the generating elements of $C^*(G,P)$ and $C^*(K,Q)$, respectively. Suppose that there is a semigroup isomorphism $\phi:P\to Q$.
        \begin{enumerate}
      \item There exists an isomorphism $\pi_{\phi}:C^*(G,P)\to C^*(K,Q)$ such that $\pi_\phi(v_p)=w_{\phi(p)}$.
      \item $(G,P)$ is amenable if and only if $(K,Q)$ is amenable.
    \end{enumerate}
  \end{prop}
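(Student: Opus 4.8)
The plan is to prove part (1) first, using the universal property of $C^*(G,P)$ in both directions to build mutually inverse homomorphisms, and then to deduce part (2) by checking that the isomorphism $\pi_\phi$ intertwines the reduced algebras, the diagonals, and the conditional expectations. For part (1), I would define $W:P\to C^*(K,Q)$ by $W_p=w_{\phi(p)}$ and verify that $W$ is a covariant representation of $P$. The isometric-representation axioms are immediate since $\phi$ is a semigroup homomorphism: $W_e=w_{\phi(e)}=w_e=1$ and $W_pW_q=w_{\phi(p)}w_{\phi(q)}=w_{\phi(pq)}=W_{pq}$. For covariance I would invoke Lemma~\ref{isomorphism preserves least upper bounds}: since $p\vee q<\infty$ iff $\phi(p)\vee\phi(q)<\infty$, and in that case $\phi(p\vee q)=\phi(p)\vee\phi(q)$, the covariance relation \eqref{covariance condition} for $w$ transports directly to $W$. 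The universal property then yields $\pi_\phi:C^*(G,P)\to C^*(K,Q)$ with $\pi_\phi(v_p)=w_{\phi(p)}$.

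To show $\pi_\phi$ is an isomorphism, I would run the identical construction with $\phi^{-1}:Q\to P$ (which is also a semigroup isomorphism, and order-preserving by the lemma) to obtain $\pi_{\phi^{-1}}:C^*(K,Q)\to C^*(G,P)$ with $\pi_{\phi^{-1}}(w_q)=v_{\phi^{-1}(q)}$. Composing, $\pi_{\phi^{-1}}\circ\pi_\phi(v_p)=v_{\phi^{-1}(\phi(p))}=v_p$ and symmetrically in the other order; since the $v_p$ generate $C^*(G,P)$ and the $w_q$ generate $C^*(K,Q)$, the two compositions are the respective identity maps. Hence $\pi_\phi$ is an isomorphism. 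This part is routine and I expect no obstacle.

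For part (2), the main idea is that amenability is the statement that the canonical conditional expectation $E$ onto the diagonal $\clsp\{v_pv_p^*:p\in P\}$ is faithful, so I would show $\pi_\phi$ carries the whole amenability apparatus for $(G,P)$ onto that for $(K,Q)$. Concretely, $\pi_\phi$ maps $v_pv_p^*\mapsto w_{\phi(p)}w_{\phi(p)}^*$, so it restricts to an isomorphism of the diagonal subalgebras and intertwines the two conditional expectations, $\pi_\phi\circ E_{(G,P)}=E_{(K,Q)}\circ\pi_\phi$ (check this on the spanning elements $v_pv_q^*$, where $E$ kills the term unless $p=q$). Because $\pi_\phi$ is an isometric isomorphism, $E_{(G,P)}$ is faithful if and only if $E_{(K,Q)}$ is faithful, which is exactly the equivalence of amenability for the two pairs.

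The step needing the most care is verifying that $\pi_\phi$ really does intertwine the expectations rather than just the diagonals; the cleanest route is to observe that $\phi$ is injective, so $p=q$ in $P$ is equivalent to $\phi(p)=\phi(q)$ in $Q$, and therefore the case distinction defining $E$ is preserved exactly under $\phi$. Once that bookkeeping is in place, faithfulness transfers immediately through the isomorphism. I would present part (2) in this expectation-intertwining form; an equivalent and equally acceptable alternative would be to check directly that $\pi_\phi$ intertwines the Toeplitz representations $T$ on $\ell^2(P)$ and $\ell^2(Q)$ via the unitary $\epsilon_p\mapsto\epsilon_{\phi(p)}$, giving a commuting square relating $\pi_T$, $\pi_\phi$, and faithfulness, but the conditional-expectation argument is shorter and self-contained given the definition of amenability recalled in the Preliminaries.
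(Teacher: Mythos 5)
Your proposal matches the paper's proof essentially step for step: part (1) via the covariant representation $p\mapsto w_{\phi(p)}$, Lemma~\ref{isomorphism preserves least upper bounds} for covariance, and the inverse map built from $\phi^{-1}$; part (2) via the intertwining relation $E_P=\pi_\phi^{-1}\circ E_Q\circ\pi_\phi$ checked on spanning elements $v_pv_q^*$ using injectivity of $\phi$, from which faithfulness transfers. The argument is correct and complete as outlined.
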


\begin{proof}
(1) We will show that $T:P\to C^*(K,Q)$ defined by $T_p=w_{\phi(p)}$ is a covariant representation of $P$, and then take $\pi_\phi:=\pi_T$. Fix $p,q\in P$. Since $\phi$ is a semigroup isomorphism we have $$T_pT_q=w_{\phi(p)}w_{\phi(q)}=w_{\phi(pq)}=T_{pq}$$ and $T_e=w_{\phi(e_G)}=w_{e_K}=1$. Hence $T$ is an isometric representation.  We have
\begin{align*}
  T_pT^*_pT_qT^*_q&=w_{\phi(p)}w^*_{\phi(p)}w_{\phi(q)}w^*_{\phi(q)}\\
                  &=\begin{cases}w_{\phi(p)\vee\phi(q)}w^*_{\phi(p)\vee\phi(q)}&\text{if $\phi(p)\vee\phi(q)<\infty$}\\0&\text{otherwise}\end{cases}\\
                  \intertext{which, using Lemma~\ref{isomorphism preserves least upper bounds}, is equivalent to}
                  &=\begin{cases}w_{\phi(p\vee q)}w^*_{\phi(p\vee q)}&\text{if $p\vee q<\infty$}\\0&\text{otherwise}\end{cases}\\
                  &=\begin{cases}T_{p\vee q}T^*_{p\vee q}&\text{if $p\vee q<\infty$}\\0&\text{otherwise.}\end{cases}
\end{align*}
Hence $T$ is covariant. By the universal property of $C^*(G,P)$ there exists a homomorphism $\pi_\phi:C^*(G,P)\to C^*(K,Q)$ such that $\pi_\phi(v_p)=T_p=w_{\phi(p)}$.

Since $\phi^{-1}:Q\to P$ is an isomorphism the argument above gives a homomorphism $\pi_{\phi^{-1}}:C^*(K,Q)\to C^*(G,P)$ such that $\pi_{\phi^{-1}}(w_q)=v_{\phi^{-1}(q)}$. In particular,   $$\pi_{\phi^{-1}}(\pi_\phi(v_p))=\pi_{\phi^{-1}}(w_{\phi(p)})=v_{\phi^{-1}(\phi(p))}=v_p$$ and $\pi_{\phi}(\pi_{\phi^{-1}}(w_q))=w_q$ It follows that $\pi_\phi$ is an isomorphism from $C^*(G,P)$ to $C^*(K,Q)$.

(2) By symmetry it suffices to show that if $(K,Q)$ is amenable then $(G,P)$ is amenable. Let $E_Q$ and $E_P$ be the conditional expectations on $C^*(K,Q)$ and $C^*(G,P)$, respectively.  To see $E_P=\pi_{\phi}^{-1}\circ E_Q\circ \pi_{\phi}$ we compute  on spanning elements:
\begin{align*}
\pi_{\phi}^{-1}\circ E_Q\circ \pi_{\phi}(v_pv^*_q)&=\pi_{\phi}^{-1}\circ E_Q(w_{\phi(p)}w^*_{\phi(q)})\\
                                            &=\begin{cases}\pi_{\phi}^{-1}(w_{\phi(p)}w^*_{\phi(q)})&\text{if $\phi(p)=\phi(q)$}\\0&\text{otherwise}\end{cases}\\
                                            &=\begin{cases}\pi_{\phi}^{-1}(w_{\phi(p)}w^*_{\phi(q)})&\text{if $p=q$}\\0&\text{otherwise}\end{cases}\\
                                            &=\begin{cases}v_pv^*_q&\text{if $p=q$}\\0&\text{otherwise}\end{cases}\\
                                            &=E_P(v_pv^*_q).
 \end{align*}
Suppose that $(K,Q)$ is amenable. Then $E_Q$ is faithful. Suppose that $a\geq 0$ and $E_P(a)=0$. Then $\pi_\phi(a)\geq 0$, and
\[
0=E_P(a)=\pi_{\phi}^{-1}\circ E_Q\circ \pi_{\phi}(a) \Rightarrow 0=E_Q\circ \pi_{\phi}(a) \Rightarrow 0=\pi_{\phi}(a)
\]
because $E_Q$ is faithful. Since $\pi_{\phi}$ is faithful, $a=0$. Now $E_P$ is faithful, and hence $(G, P)$ is amenable.
\end{proof}

Next we need some lemmas which will be used to show that the height map $\theta$ is a controlled map. In particular we need to identify the minimal elements of Definition~\ref{expanded controlled map definition}.
If $x\in P^*$ has   normal form $$x=p_0tp_1t\ldots p_{n-1}tp_n$$ we  call $p_0tp_1t\ldots p_{n-1}t$ the \emph{stem} of $x$ and write $$\stem(x)=p_0tp_1t\ldots p_{n-1}t.$$ The set of stems is our candidate for the minimal elements.

\begin{lemma}\label{p,q have cub in P* iff they have cub in P}   Let $(G,P)$ be a quasi-lattice ordered group with subgroups $A$ and $B$. Suppose that  $\phi:A\to B$ is an isomorphism which  satisfies the hypotheses of Theorem~\ref{(G*,P*) is left ql}. Let $p,q\in P$. Then $p$ and $q$ have a common  upper bound in $P^*$ if and only if $p$ and $q$ have a common  upper bound in $P$.
\end{lemma}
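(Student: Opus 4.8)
The plan is to prove the nontrivial implication by induction on the height $\theta(z)$ of a common upper bound $z\in P^*$ of $p$ and $q$. The reverse implication is immediate: a common upper bound in $P$ is one in $P^*$ since $P\subseteq P^*$. For the forward implication, suppose $z\in P^*$ satisfies $p\le z$ and $q\le z$. If $\theta(z)=0$ then $z\in P$ and we are done, which is the base case. Assuming the statement for all common upper bounds of height less than $n$, suppose $\theta(z)=n\ge 1$. Using Lemma~\ref{right lower bound gives unique normal form in P} (applicable by hypothesis~(2) of Theorem~\ref{(G*,P*) is left ql}), I would write $z$ in normal form $z=z_0tz_1t\cdots tz_n$ with $z_0\in L_A$ and $z_i\in L_A$ for $0\le i<n$.

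First I would extract from $p\le z$ a positive element of $A\cap P$. Since $p^{-1}z\in P^*$ has height $n\ge 1$, reducing $p^{-1}z=p^{-1}z_0\,tz_1t\cdots tz_n$ to normal form writes $p^{-1}z_0=c\alpha$ with $\alpha\in A$ and $c\in X$ the coset representative of $(p^{-1}z_0)A$; Lemma~\ref{right lower bound gives unique normal form in P} forces $c\in L_A\subseteq P$. Because $pc=z_0\alpha^{-1}\in z_0A\cap P$ and $z_0$ is the minimal representative of its coset, $z_0\le pc$, so $\gamma_p:=z_0^{-1}pc=\alpha^{-1}\in A\cap P$ and $p_1:=pc=z_0\gamma_p\in P$ satisfies $p\le p_1$. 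Reading off the tail of the normal form of $p^{-1}z$ then shows that, with $M:=z_1tz_2\cdots tz_n\in P^*$ (which does not depend on $p$ and has height $n-1$), the element $\phi(\gamma_p)^{-1}M$ lies in $P^*$; that is, $\phi(\gamma_p)\le M$. Running the same argument for $q$ produces $\gamma_q\in A\cap P$ with $q_1:=z_0\gamma_q\ge q$ and $\phi(\gamma_q)\le M$.

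The key point is that the residual least-upper-bound problem lives one height lower. Since $\phi(\gamma_p),\phi(\gamma_q)\in P$ have the common upper bound $M\in P^*$ of height $n-1$, the induction hypothesis gives a common upper bound of $\phi(\gamma_p)$ and $\phi(\gamma_q)$ in $P$, i.e. $\phi(\gamma_p)\vee\phi(\gamma_q)<\infty$ in $(G,P)$. As $\phi(\gamma_p),\phi(\gamma_q)\in B$, hypothesis~(3) gives $\phi(\gamma_p)\vee\phi(\gamma_q)\in B\cap P$, and since $\phi(A\cap P)=B\cap P$ its preimage $\phi^{-1}\big(\phi(\gamma_p)\vee\phi(\gamma_q)\big)\in A\cap P$ is a common upper bound of $\gamma_p$ and $\gamma_q$ in $P$. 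Multiplying on the left by $z_0$ and using left-invariance of the order yields a common upper bound of $p_1$ and $q_1$ in $P$; since $p\le p_1$ and $q\le q_1$, this is a common upper bound of $p$ and $q$ in $P$, closing the induction.

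I expect the main obstacle to be the bookkeeping in the reduction step: verifying that the first block $c$ lies in $L_A$, keeping the coset identities $pc=z_0\alpha^{-1}$ and $\gamma_p=\alpha^{-1}$ straight, and confirming that the tail $\phi(\gamma_p)^{-1}M$ genuinely lands in $P^*$ so that $\phi(\gamma_p)\le M$. The conceptual crux is recognizing that the obstruction to $p$ and $q$ having an upper bound in $P$ is pushed, through the defining relation $at=t\phi(a)$, into a strictly lower-height join of elements of $B$, where hypothesis~(3) together with $\phi(A\cap P)=B\cap P$ lets us transport the bound back through $\phi^{-1}$.
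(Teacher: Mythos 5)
Your proof is correct, but it takes a genuinely different route from the paper's. The paper fixes the pair $p,q$, takes a common upper bound $x$ of minimal height $k\geq 1$, writes both $p^{-1}x$ and $q^{-1}x$ in normal form, and cancels them against each other inside $p^{-1}q=(p^{-1}x)(q^{-1}x)^{-1}$: since $p^{-1}q$ contains no $t$, there must be a pinch $t(p_kq_k^{-1})t^{-1}$ with $p_kq_k^{-1}\in B$, and Lemma~\ref{PP-1-closed is vee closed} (which packages hypothesis (3) of Theorem~\ref{(G*,P*) is left ql} as a positive decomposition $p_kq_k^{-1}=b_1b_2^{-1}$ with $b_1,b_2\in B\cap P$) is used to pass the $b_i$ through $t$ and manufacture a common upper bound of height $k-1$ for the \emph{same} pair, contradicting minimality. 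You instead peel off the first block $z_0$ of the upper bound $z$, use hypothesis (2) pointwise (via $pc\in z_0A\cap P\Rightarrow z_0\leq pc$) to extract $\gamma_p,\gamma_q\in A\cap P$, and let the pair itself change: the induction hypothesis is applied to $\phi(\gamma_p),\phi(\gamma_q)$ under the shorter bound $M$, and hypothesis (3) enters only in the weaker form that a $P$-join of two elements of $B\cap P$ lies in $B$, after which $\phi(A\cap P)=B\cap P$ transports the join back through $\phi^{-1}$ (the one step you gloss over is the check $\gamma_p\leq\phi^{-1}\big(\phi(\gamma_p)\vee\phi(\gamma_q)\big)$, which holds because $\phi(\gamma_p)^{-1}\big(\phi(\gamma_p)\vee\phi(\gamma_q)\big)\in B\cap P=\phi(A\cap P)$, and likewise for $\gamma_q$). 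The trade-off: the paper's descent never invokes minimality of coset representatives beyond what the normal form of Lemma~\ref{right lower bound gives unique normal form in P} already encodes, but needs the full strength of Lemma~\ref{PP-1-closed is vee closed} at each step; your argument avoids Lemma~\ref{PP-1-closed is vee closed} entirely, exploits the minimal-representative property of $z_0$ directly, and returns a common upper bound in $P$ in one shot rather than lowering the height of a bound for a fixed pair. The bookkeeping you flagged does check out: since all exponents of $t$ in $p^{-1}z$ are positive there are no pinches, so the first block of its unique normal form is the $X$-representative $c$ of $(p^{-1}z_0)A$, which lies in $L_A$ by Lemma~\ref{right lower bound gives unique normal form in P}, and $\phi(\gamma_p)^{-1}M=t^{-1}c^{-1}p^{-1}z$ is exactly the tail of that normal form, hence a product of elements of $P$ and $t$, i.e.\ an element of $P^*$.
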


\begin{proof}
First suppose that $p$ and $q$ have a common   upper bound $r\in P$. Then $r\in P^*$ and so $r$ is a common  upper bound for $p$ and $q$ in $P^*$.

Second, suppose that $p$ and $q$ have a common upper bound $x\in P^*$. If $\theta(x)=0$, then $x\in P$ and we are done. Suppose, aiming for a contradiction, that $\theta(x)=k$ for some $k\geq 1$, and that $p,q$ have no common upper bound $y$ with $\theta(y)<k$.

  Observe that $p^{-1}x,q^{-1}x\in P^*$, and that $\theta(p^{-1}x)=\theta(x)=\theta(q^{-1}x)=k$. We write $p^{-1}x$ and $q^{-1}x$ in their normal forms:
  $$p^{-1}x=p_0tp_1t\ldots p_{k-1}tp_k\quad\text{and}\quad q^{-1}x=q_0tq_1t\ldots q_{k-1}tq_k,$$
  where $p_i,q_i\in L_A$ for $i<k$ and $p_k,q_k\in P$.
  Consider $$p^{-1}q=p^{-1}x(q^{-1}x)^{-1}=p_0tp_1t\ldots p_{k-1}tp_kq_k^{-1}t^{-1}q_{k-1}^{-1}\ldots t^{-1}q_0^{-1}.$$
  Since $p^{-1}x(q^{-1}x)^{-1}=p^{-1}q$ and $p^{-1}q$ is in normal form we must have some cancellation. Since the first $k$ terms are already in normal form, $p_kq_k^{-1}\in B$. By Lemma~\ref{PP-1-closed is vee closed}, there exist $b_1,b_2\in B\cap P$ such that $b_1\leq p_k$, $b_2\leq q_k$ and $b_1b_2^{-1}=p_kq_k^{-1}$.
Then
  \begin{align*}
   p^{-1}q&= p^{-1}x(q^{-1}x)^{-1}=p_0t\ldots p_{k-1}t(p_kq_k^{-1})t^{-1}q_{k-1}^{-1}\ldots t^{-1}q_0^{-1}\\
    &=p_0t\ldots p_{k-1}t(b_1b_2^{-1})t^{-1}q_{k-1}^{-1}\ldots t^{-1}q_0^{-1} \\
    &=p_0t\ldots p_{k-1}\phi^{-1}(b_1)tt^{-1}\phi^{-1}(b_2)^{-1}q_{k-1}^{-1}\ldots t^{-1}q_0^{-1} \\
    &=p_0t\ldots p_{k-1}\phi^{-1}(b_1)\phi^{-1}(b_2)^{-1}q_{k-1}^{-1}\ldots t^{-1}q_0^{-1}.
  \end{align*}
Rearranging gives $$p(p_0t\ldots tp_{k-1}\phi^{-1}(b_1))=q(q_0tq_1t\ldots tq_{k-1}\phi^{-1}(b_2))\in P^*.$$
Therefore $y=p(p_0tp_1t\ldots p_{k-1}\phi^{-1}(b_1))$ is a common upper bound for $p$ and $q$ in $P^*$ and $\theta(y)=k-1$, giving us the contradiction we sought. Therefore $p$ and $q$ have a common upper bound $y$ with $\theta(y)=0$, and hence they have a common  upper bound in $P$.
  \end{proof}

The statement of Lemma~\ref{stem preserves least upper bounds} is adapted from \cite[Lemma 3.4]{HuefClarkRaeburn2015}.

\begin{lemma}\label{stem preserves least upper bounds}
 Let $x,y\in P^*$ such that $x\vee y<\infty$. Write $$x=\stem(x)p \text{ and } y=\stem(y)q \text{\ where $p,q\in P.$}$$
   \begin{enumerate}
     \item If $\theta(x)=\theta(y)$, then $\stem(x)=\stem(y)$ and $p\vee q<\infty$. In particular, $x\vee y=\stem(x)(p\vee q)$ and $\theta(x\vee y)=\theta(x)=\theta(y)$.
     \item If $\theta(x)<\theta(y)$, then there exists $r\in P$ such that $x\vee y=yr$ and $\theta(x\vee y)=\theta(y)$.
   \end{enumerate}
In particular, $\theta(x \vee y)=\max\{\theta(x), \theta(y)\}$.
\end{lemma}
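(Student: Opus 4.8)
The plan is to work with the normal forms of $x$ and $y$ and exploit the fact that taking least upper bounds in $P^*$ is compatible with cancellation against the stem. Throughout I would use Lemma~\ref{right lower bound gives unique normal form in P} to write $x=\stem(x)p$ and $y=\stem(y)q$ with $p,q\in P$, where $\stem(x)=p_0tp_1t\cdots p_{m-1}t$ and $\stem(y)=q_0tq_1t\cdots q_{n-1}t$ with all $p_i,q_i\in L_A$. The key observation is that, because stems are built from coset-representative data in $L_A$, two stems of the same height must actually coincide once the elements share a common upper bound --- this is what powers part (1).

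For part (1), assume $\theta(x)=\theta(y)=n$, so $\stem(x)$ and $\stem(y)$ have the same number of $t$'s. Let $z\in P^*$ be a common upper bound, so $x^{-1}z,y^{-1}z\in P^*$. I would argue that $x^{-1}y = p^{-1}\stem(x)^{-1}\stem(y)q$ lies in $P^*P^{*-1}$ and has height $0$; by the uniqueness of normal form, reducing this expression forces the $t$'s and $t^{-1}$'s to cancel term by term from the inside out, and since each $p_i,q_i\in L_A$ is a \emph{minimal} coset representative, each cancellation step can only succeed if $p_i=q_i$. This yields $\stem(x)=\stem(y)$. Writing $s:=\stem(x)=\stem(y)$, left-invariance of the order gives $p=s^{-1}x\leq s^{-1}z$ and $q=s^{-1}y\leq s^{-1}z$, so $p$ and $q$ have a common upper bound in $P^*$; but $p,q\in P$, so Lemma~\ref{p,q have cub in P* iff they have cub in P} gives $p\vee q<\infty$ in $P$. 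Since $s(p\vee q)$ is clearly an upper bound for both $x=sp$ and $y=sq$, and any common upper bound factors through $s$ by left-invariance, I conclude $x\vee y=s(p\vee q)=\stem(x)(p\vee q)$, whence $\theta(x\vee y)=n$.

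For part (2), assume $\theta(x)<\theta(y)$. Any common upper bound $z$ satisfies $x\leq z$ and $y\leq z$, so $x^{-1}z,y^{-1}z\in P^*$ and in particular $x^{-1}y=(x^{-1}z)(y^{-1}z)^{-1}\in P^*P^{*-1}$. Reducing $x^{-1}y$ and using that the initial $\theta(x)$ stem-terms of $x$ and $y$ must agree (by the same minimal-coset-representative cancellation argument as in part (1), applied to the shorter height), I would show that $\stem(x)$ is an initial segment of the normal form of $y$, so that $x\leq y$, i.e.\ there exists $r\in P^*$ with $y=xr$; tracking heights gives $\theta(r)=\theta(y)-\theta(x)>0$ and in fact one shows $x\vee y=y\cdot r'$ for some $r'\in P$ with $\theta(x\vee y)=\theta(y)$ directly, since $y$ itself is then the relevant upper bound up to an element of $P$. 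The final clause $\theta(x\vee y)=\max\{\theta(x),\theta(y)\}$ is immediate from (1) and (2) together with the symmetric case $\theta(x)>\theta(y)$.

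The main obstacle I anticipate is the cancellation bookkeeping in part (1): justifying rigorously that equality of the heights together with a common upper bound forces the stems to be \emph{identical}, rather than merely compatible. The crux is that minimality of each $p_i,q_i$ in $L_A$ (the defining property of minimal coset representatives from Theorem~\ref{(G*,P*) is left ql}(2)) prevents any partial cancellation from producing a shorter or differently-represented normal form, so the uniqueness of the normal form in Lemma~\ref{right lower bound gives unique normal form in P} can be invoked to match the stems letter by letter. Getting this induction on height clean --- peeling off one $t$ at a time while showing the residual elements still have a common upper bound of the appropriate height --- is where the real work lies.
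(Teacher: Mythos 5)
There are two genuine gaps here, one in each part. In part (1), your mechanism for proving $\stem(x)=\stem(y)$ --- reduce $x^{-1}y$, and claim that height $0$ together with uniqueness of normal forms forces the $t$'s and $t^{-1}$'s to cancel from the inside out --- does not work, because a height-zero element can be fully reduced (in Britton's sense) while retaining $t^{\pm1}$ in its normal form. Concretely, in $\BS(c,d)=\langle a,t\mid t^{-1}a^dt=a^c\rangle$ with $d>1$, take $x=t$ and $y=at$: then $x^{-1}y=t^{-1}at$ has height $0$ and lies in the relevant product set, but no pinch is available since $a\notin A=d\Z$, so its normal form keeps both $t^{-1}$ and $t$. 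What forces the cancellation is precisely the common upper bound (here $x$ and $y$ have none, consistently with the lemma), and your sketch never feeds it into the cancellation step --- you only invoke it afterwards to get $p\vee q<\infty$. The paper's route avoids this entirely: since $x\le x\vee y$ and $y\le x\vee y$, write $x\vee y=xw=yw'$ with $w,w'\in P^*$; renormalizing as in Lemma~\ref{right lower bound gives unique normal form in P} only pushes elements of $A\cap P$ rightward across $t$, so the normal form of $x\vee y$ begins with the letters of $\stem(x)$ and also with those of $\stem(y)$, and uniqueness of the normal form gives $\stem(x)=\stem(y)$. The rest of your part (1) --- left-invariance plus Lemma~\ref{p,q have cub in P* iff they have cub in P} to get $p\vee q<\infty$ and $x\vee y=\stem(x)(p\vee q)$ --- is correct and agrees with the paper.

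Part (2) contains an outright error: from ``$\stem(x)$ is an initial segment of the normal form of $y$'' you infer $x\le y$, and this is false, because the tail $p$ in $x=\stem(x)p$ need not be absorbed into $y$. Take $(G,P)=(\Z,\N)$ with $A=B=\Z$ and $\phi=\id$, so $G^*\cong\Z^2=\langle a,t\mid at=ta\rangle$ and $P^*\cong\N^2$; then $x=a$, $y=t$ satisfy $\theta(x)=0<1=\theta(y)$ and $x\vee y=ta=y\cdot a$, yet $x\not\le y$, and your route would yield the wrong answer $x\vee y=y$. The missing idea is the paper's padding device: since $x\le x\vee y$, factor $x^{-1}(x\vee y)=\tau\gamma u$ with $\tau$ a stem of height $\theta(y)-\theta(x)$, $\gamma$ a stem of height $\theta(x\vee y)-\theta(y)$ and $u\in P$; then $x\tau\le x\vee y$ has the same height as $y$, so part (1) applies to the pair $(x\tau,y)$ and gives $x\tau\vee y=\stem(y)(q\vee w)$; finally $y\le x\vee y\le \stem(y)(q\vee w)$ squeezes the heights so that $r:=y^{-1}(x\vee y)\in P^*$ has height $0$, hence $r\in P$ and $x\vee y=yr$ with $\theta(x\vee y)=\theta(y)$. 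Without some substitute for this padding step, your sketch of (2) cannot be repaired as stated.
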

\begin{proof}
  (1) Suppose that $\theta(x)=\theta(y)$. We know that $x\leq x\vee y$ and $y\leq x\vee y$. Thus, by the uniqueness of normal forms, $\stem(x)=\stem(y)$. Now by left invariance of the partial order we see that 
\[p=\stem(x)^{-1}x\leq\stem(x)^{-1}(x\vee y)\quad\text{and}\quad
q=\stem(x)^{-1}y\leq\stem(x)^{-1}(x\vee y).
\] 
Therefore $p$ and $q$ have a common left upper bound in $P^*$ and hence, by Lemma~\ref{p,q have cub in P* iff they have cub in P}, they have a common left upper bound in $P$ and $p\vee q$ exists in $P$.
  By left invariance $x\vee y=\stem(x)p\vee q$. Further, $\theta(x\vee y)=\theta(x)=\theta(y)$.

  (2) Suppose that $\theta(x)<\theta(y)$. Since $x\leq x\vee y$ we have $x^{-1}(x\vee y)\in P^*$. We can write $x^{-1}(x\vee y)=\tau \gamma u$ for some $u\in P$,  $\tau\in \Sigma_{\theta(y)-\theta(x)}$ and $\gamma\in\Sigma_{\theta(x\vee y)-\theta(y)}$. Then $x\vee y=x\tau\gamma u$.

  Now we have $x\tau\leq x\vee y$ and $\theta(x\tau)=\theta(x)+(\theta(y)-\theta(x))=\theta(y)$. Write $x\tau=\stem(x\tau) w$ for some $w\in P$. Therefore $x\tau\vee y<\infty$ and $\theta(x\tau)=\theta(y)$ so we can apply (1) to see that $\stem(x\tau)=\stem(y)$ and $x\tau\vee y=\stem(y)(q\vee w)$.

  Now $x\vee y\leq\stem(y)(q\vee w)$. Therefore there exists some $r\in P$ such that $x\vee y=\stem(y)qr=yr$. Then $\theta(x\vee y)=\theta(y)$.

  By (1) and (2) we see that $$\theta(x\vee y)=\begin{cases}\theta(x)&\text{if $\theta(x)=\theta(y)$}\\\theta(y)&\text{if $\theta(x)<\theta(y)$.}\end{cases}$$ Thus $\theta(x \vee y)=\max\{\theta(x), \theta(y)\}$.
\end{proof}

\begin{proof}[Proof of Theorem~\ref{HNN extensions are amenable as expected}]
We will use Theorem~\ref{p-controlled map implies amenable}; to do so we need to show that the height map $\theta:(G^*,P^*)\to (\Z, \N)$ is a controlled map in the sense of Definition~\ref{expanded controlled map definition},  and  that $(\theta^{-1}(e),\theta^{-1}(e)\cap P^*)$ is amenable.

  To see that $\theta$ is order-preserving, let $x, y\in P^*$ such that $x\leq y$. Then $x^{-1}y\in P^*$ and $\theta(x^{-1}y)\geq 0$. So
$
  0\leq \theta(x^{-1}y)=-\theta(x)+\theta(y)
$
  and hence  $\theta(x)\leq \theta(y)$. By Lemma~\ref{stem preserves least upper bounds}, if $x\vee y<\infty$, then $\theta(x\vee y)=\max\{\theta(x),\theta(y)\}=\theta(x)\vee\theta(y)$.

  For every $k\in \N$,   $\Sigma_k$ is complete: if $x\in \theta^{-1}(k)\cap P^*$, then $\stem(x)\in \Sigma_k$ and $x=\stem(x)p$ for some $p\in P$. Hence $\stem(x)\leq x$. By the uniqueness of normal forms, if $\sigma,\tau\in \Sigma_k$ and $\sigma\vee \tau<\infty$ then $\sigma=\tau$.
  Therefore $\theta$ is a controlled map into the amenable group $\Z$.

  Suppose that $(G,P)$ is amenable. Then $\theta^{-1}(0)\cap P^*$ is the set of elements of $P^*$ with height $0$, and hence they all have normal form $p_0$ for some $p_0\in P$. Thus $\theta^{-1}(0)\cap P^*$ is isomorphic to $P$. Since $(G,P)$ is amenable, so is $(\theta^{-1}(0),\theta^{-1}(0)\cap P^*)$ by Lemma~\ref{semigroup isomorphism preserves universal and amenability}. Since $(\Z,\N)$ is amenable, it now follows from Theorem~\ref{p-controlled map implies amenable} that $(G^*,P^*)$ is an amenable quasi-lattice ordered group.
\end{proof}

\begin{example}
Since $(\Z^2,\N^2)$ and $(\F_2,\F_2^+)$ are amenable quasi-lattice ordered groups \cite[\S 5.1]{Nica1992}, Theorem~\ref{HNN extensions are amenable as expected} implies that the HNN extensions $(\Z^{2*},\N^{2*})$ and $(\F_2^{*},\F_2^{+*})$ in Examples~\ref{example Z^2}-\ref{example free group 2} are amenable quasi-lattice ordered groups.
\end{example}

  \bibliographystyle{amsplain}

\begin{thebibliography}{99}

\bibitem{BrownloweLarsenStammeier2016}
N.~Brownlowe, N.S. Larsen, and N.~Stammeier, \emph{On ${C}^*$-algebras
  associated to right {LCM} semigroups}, Trans. Amer. Math. Soc. \textbf{369} (2017), 31--68.


\bibitem{HuefClarkRaeburn2015}
L.O. Clark, A.~an~Huef, and I.~Raeburn, \emph{Phase transitions of the {T}oeplitz algebras of
  {B}aumslag-{S}olitar semigroups}, Indiana Univ. Math. J. \textbf{65} (2016), 2137--2173.


\bibitem{CrispLaca2002}
J.~Crisp and M.~Laca, \emph{On the {T}oeplitz algebras of right-angled and
  finite-type {A}rtin groups}, J.  Austral. Math. Soc. \textbf{72} (2002),
  223--245.

\bibitem{CrispLaca2007}
J.~Crisp and M.~Laca, \emph{Boundary quotients and ideals of {T}oeplitz ${C}^*$-algebras of
  {A}rtin groups}, J. Funct. Anal. \textbf{242} (2007), 127--156.

\bibitem{LacaRaeburn1996}
M.~Laca and I.~Raeburn, \emph{Semigroup crossed products and the {T}oeplitz
  algebras of nonabelian groups}, J. Funct. Anal. \textbf{139} (1996),
  415--440.

\bibitem{LacaRaeburn2010}
M.~Laca and I.~Raeburn, \emph{Phase transition on the {T}oeplitz algebra of the affine
  semigroup over the natural numbers}, Adv. Math. \textbf{225} (2010),
  643--688.

\bibitem{Li2012}
X.~Li, \emph{Semigroup ${C}^*$-algebras and amenability of semigroups}, J.
  Funct. Anal. \textbf{262} (2012), 4302--4340.

\bibitem{LyndonSchupp1977}
R.C. Lyndon and P.E. Schupp,  Combinatorial Group Theory,
  Springer-Verlag, 1977.

\bibitem{Nica1992}
A.~Nica, \emph{${C}^*$-algebras generated by isometries and {W}iener-{H}opf
  operators}, J. Operator Theory \textbf{27} (1992), 17--52.

\bibitem{RaeburnWilliams1998}
I.~Raeburn and D.P. Williams,  Morita Equivalence and Continuous-Trace
  ${C}^*$-Algebras, Mathematical Surveys and Monographs, vol. 60, Amer.  Math. Soc., 1998.

\bibitem{Spielberg2012}
J.~Spielberg, \emph{${C}^*$-algebras for categories of paths associated to the
  {B}aumslag-{S}olitar groups}, J. Lond. Math. Soc. \textbf{86} (2012),
  728--754.
  
\bibitem{Spielberg2014}
J.~Spielberg, \emph{Groupoids and $C^*$-algebras for categories of paths}, Trans. Amer. Math. Soc. \textbf{366} (2014), 5771--5819.

\bibitem{Starling2015}
C.~Starling, \emph{Boundary quotients of ${C}^*$-algebras of right {LCM}
  semigroups}, J. Funct. Anal. \textbf{268} (2015),  3326--3356.

\bibitem{Tomiyama1957}
J.~Tomiyama, \emph{On the projection of norm 1 on $W^*$-algebras},  Proc. Japan Acad. \textbf{33} (1957),  608--612.

\end{thebibliography}

\end{document}